\numberwithin{equation}{section}
\theoremstyle{plain}
\newtheorem{theorem}{Theorem}[section]
\newtheorem{propos}[theorem]{Proposition}
\newtheorem{cor}[theorem]{Corollary}
\newtheorem{lem}[theorem]{Lemma}
\newtheorem{quest}[theorem]{Question}
\newtheorem{problem}[theorem]{Problem}
\theoremstyle{definition}
\newtheorem{defin}[theorem]{Definition}
\newtheorem{remark}[theorem]{Remark}
\newcommand{\hM}{\widehat{M}}
\newcommand{\As}{\mathrm{As}}
\newcommand{\Cy}{\mathrm{Cy}}
\newcommand{\Pe}{\mathrm{Pe}}
\newcommand{\Int}{\mathop{\mathrm{Int}}\nolimits}
\newcommand{\Sk}{\mathop{\mathrm{Sk}}\nolimits}
\newcommand{\pr}{\mathop{\mathrm{pr}}\nolimits}
\newcommand{\cone}{\mathop{\mathrm{cone}}\nolimits}
\newcommand{\mvol}{\mathop{\mathrm{vol}}\nolimits}
\newcommand{\K}{\mathbb{K}}
\newcommand{\R}{\mathbb{R}}
\newcommand{\C}{\mathbb{C}}
\newcommand{\Z}{\mathbb{Z}}
\newcommand{\Q}{\mathbb{Q}}
\newcommand{\CU}{\mathcal{U}}
\newcommand{\CC}{\mathcal{C}}
\newcommand{\CB}{\mathcal{B}}
\newcommand{\CI}{\mathcal{I}}
\newcommand{\CR}{\mathcal{R}}
\newcommand{\bK}{\overline{K}}
\newcommand{\clambda}{\lambda_{\text{\textnormal{can}}}}
\begin{document}

\title[Small covers and realization of cycles]{Small covers of graph-associahedra and realization of cycles}

\author{Alexander A.~Gaifullin}

\thanks{The work is supported by the Russian Science Foundation under grant 14-11-00414.}

\address{Steklov Mathematical Institute of the Russian Academy of Sciences}

\keywords{Realization of cycles, domination relation, URC-manifold, small cover, graph-associahedron}

\email{agaif@mi.ras.ru}

\date{}

\subjclass[2010]{57N65, 52B20, 52B70, 05E45, 20F55}

\sloppy

\begin{abstract}
An oriented connected closed manifold~$M^n$ is called a \textit{URC-manifold} if for any oriented connected closed manifold~$N^n$ of the same dimension there exists a non-zero degree mapping of a finite-fold covering~$\widehat{M}^n$ of~$M^n$ onto~$N^n$. This condition is equivalent to the following: For any $n$-dimensional integral homology class of any topological space~$X$, a multiple of it can be realized as the image of the fundamental class of a finite-fold covering~$\widehat{M}^n$ of~$M^n$ under a continuous mapping $f\colon \widehat{M}^n\to X$. In 2007 the author gave a constructive proof of the classical result by Thom that a multiple of any integral homology class can be realized as an image of the fundamental class of an oriented smooth manifold. This construction yields the existence of URC-manifolds of all dimensions. For an important class of manifolds, the so-called small covers of graph-associahedra corresponding to connected graphs, we prove that either they or their two-fold orientation coverings are URC-manifolds. In particular, we obtain that the two-fold covering of the small cover of the usual Stasheff associahedron is a URC-manifold. In dimensions~4 and higher, this manifold is simpler than all previously known URC-manifolds.
\end{abstract}

\maketitle

\section{Introduction}
The following question is called Steenrod's problem on realization of cycles: Given an integral homology class~$z\in H_n(X;\Z)$ of a topological space~$X$, does there exist an oriented closed smooth manifold~$N^n$ and a  continuous mapping~$f$ of~$N^n$ to~$X$ such that $f_*[N^n]=z$? If such pair $(N^n,f)$ exists, the class~$z$ is said to be  \textit{realizable by Steenrod}. A classical theorem of Thom~\cite{Tho58} asserts that: 
\begin{itemize}
\item All classes of dimensions less than or equal to~$6$ are realizable by Steenrod.
\item In every dimension greater than or equal to~$7$, there exist non-realizable classes.
\item Any integral homology class~$z$ of any topological space is realizable by Steenrod with some multiplicity, that is, there exists an integer $k>0$ such that the class~$kz$ is realizable by Steenrod. (In fact, the number~$k$ depends only on the dimension~$n$.)
\end{itemize} 

Novikov~\cite{Nov62} showed that a class~$z\in H_n(X;\Z)$ is realizable by Steenrod if for all $j\ge 1$ and all primes~$p>2$ the  $(n-2j(p-1)-1)$-dimensional homology of~$X$ does not contain $p$-torsion. The best known bound for the multiplicity~$k$ was obtained by Buchstaber~\cite{Buc69},~\cite{Buc70}:
$$
k\le\prod_{p>2}p^{\left[\frac{n-2}{2(p-1)}\right]}\,,
$$
where the product is taken over all odd primes~$p$.

Notice that the question about manifolds that realize homology classes was not discussed in~\cite{Tho58}, \cite{Nov62}, \cite{Buc69}, \cite{Buc70}, since the methods of these papers did not give any approach to it. 

The author~\cite{Gai07} gave a constructive solution of Steenrod's problem, i.\,e., presented an explicit combinatorial construction of a pair~$(N^n,f)$ that realizes a multiple of the given homology class. This construction does not allow to obtain effective estimates for the multiplicity~$k$. But, unlike the classical algebraic topological approach, it allows us to control the topology of the obtained manifold~$N^n$. Namely, it was shown in~\cite{Gai08a},~\cite{Gai08b} that in every dimension~$n$ there exists a single manifold~$M_0^n$ \ --- \ the so-called Tomei manifold --- such that for any homology class~$z\in H_n(X;\Z)$ of any topological space~$X$, the manifold~$N^n$ that realizes a multiple~$kz$ of~$z$ can be chosen to be a (non-ramified) finite-fold covering of~$M_0^n$.  
The Tomei manifold~$M^n_0$ first appeared in the paper~\cite{Tom84} by Tomei as the isospectral manifold of symmetric tridiagonal real matrices. The problem arises: Which other manifolds~$M^n$, except for the Tomei manifold~$M^n_0$, satisfy the same universal property with respect to the problem on realization of cycles? The author~\cite{Gai13},~\cite{Gai13-b} has introduced the following class of URC-manifolds (form the words ``Universal Realizator of Cycles'').

\begin{defin}
An oriented connected closed manifold~$M^n$ is called a \textit{URC-manifold} if, for any topological space~$X$ and any homology class $z\in H_n(X;\Z)$, there exists a finite-fold covering~$\hM^n$ of~$M^n$ and a continuous mapping $f\colon\hM^n\to X$ such that $f_*[\hM^n]=kz$ for a non-zero integer~$k$. 
\end{defin}

\begin{remark}
We always endow the covering~$\hM^n$ of~$M^n$ with the orientation induced by the orientation of~$M^n$, i.\,e., such that the Jacobian of the projection  $\hM^n\to M^n$ is positive at all points of~$\hM^n$. We do not require~$\hM^n$ to be connected. If we restrict ourselves to considering pathwise connected spaces~$X$ only and require~$\hM^n$ to be connected, then we obtain an equivalent definition of a URC-manifold
\end{remark}

Another motivation for the class of URC-manifolds is the problem about the domination relation on the set of homotopy types of oriented closed manifolds. Let~$M^n$ and~$N^n$ be connected oriented closed manifolds of the same dimension. We say that $M^n$ \textit{dominates\/}~$N^n$ and write $M^n\ge N^n$ if there exists a non-zero degree mapping of~$M^n$ onto~$N^n$. We say that~$M^n$ \textit{virtually dominates\/}~$N^n$ if a finite-fold covering of~$M^n$ dominates~$N^n$. The study of the domination relation goes back to the works of Milnor and Thurston~\cite{MiTh77} and Gromov~\cite{Gro82}. Carlson and Toledo~\cite{CaTo89} posed a problem of finding of a reasonable \textit{maximal class\/} of $n$-dimensional manifolds with respect to domination, that is, a class of  $n$-dimensional manifolds such that any $n$-dimensional manifolds can be dominated by a manifold in this class.  Each URC-manifold~$M^n$ provides a solution to this problem: For a required class one can take the class of all finite-fold coverings of~$M^n$. Equivalently, URC-manifolds are exactly manifolds that are greatest with respect to virtual domination. In other words, a manifold~$M^n$ is a URC-manifold if and only if it virtually dominates every oriented connected closed manifold of the same dimension. A good survey of works on the domination relation can be found in~\cite{KoLo09}.

The first example of a URC-manifold, the Tomei manifold~$M_0^n$ mentioned above, is a \textit{small cover\/} of certain simple polytope, namely, of the \textit{permutahedron\/}~$\Pe^n$. This means that~$M_0^n$ is glued in a special way out of $2^n$ copies of~$\Pe^n$. (Strict definitions of a small cover and of the permutahedron will be given in Section~\ref{section_defin}.) In~\cite{Gai13} the author has found many other examples of URC-manifolds among small covers of other simple polytopes. However, all these simple polytopes, except for the dodecahedron in dimension~$3$, are more complicated than the permutahedron. Here `more complicated' can be understood, for instance, in the sense that they have greater numbers of facets. Hence, in dimensions~$4$ and higher, the Tomei manifolds~$M_0^n$ have remained the simplest known URC-manifolds.

The main result of the present paper is the construction of new examples of URC-manifolds that are simpler than the Tomei manifolds. Carr and Devados~\cite{CaDe05} suggested a construction that assigns a simple polytope~$P_{\Gamma}$, which they called a \textit{graph-associahedron\/},  to every finite graph~$\Gamma$, see Section~\ref{subsection_as} for details. Our main result is as follows: For any connected graph~$\Gamma$, any orientable small cover~$M_{P_{\Gamma},\lambda}$ of~$P_{\Gamma}$ is a URC-manifold, and for any non-orientable small cover~$M_{P_{\Gamma},\lambda}$ of~$P_{\Gamma}$, the two-fold orientation covering of~$M_{P_{\Gamma},\lambda}$ is a  URC-manifold. This result will be formulated in details in Section~\ref{subsection_result} after we give all necessary definitions. We shall present an explicit construction that, for a singular cycle representing the given homology class, builds a manifold being a finite-fold covering of~$M_{P_{\Gamma},\lambda}$ that realizes a multiple of the given homology class. This construction is a further modification of the construction suggested by the author in~\cite{Gai07} and developed in~\cite{Gai08a}, \cite{Gai08b}, and~\cite{Gai13}. Notice that another modification of this construction was applied to solve the problem of constructing of an oriented simplicial manifold with the prescribed set of links of vertices, see~\cite{Gai08c}.

\begin{remark}
Unfortunately, we shall always face the following inconvenience concerning the terminology. We use to work with coverings of manifolds, and the word `covering' is always understood in the sense of classical topology, i.\,e., as a non-ramified covering. On the other hand, the main source of example of manifolds for us is the construction  of small covers of simple polytopes. Here the expression `small cover' should be understood as a single whole, and a small cover of a polytope is by no means a covering of this polytope in the sense of classical topology. Nevertheless, the term `small cover'  is established and it is impossible to change it. Hence, we shall often deal with objects like a `two-fold covering of a small cover of a simple polytope'. To avoid confusion, we settle for all that the word `covering' is always understood in the sense of classical topology unless it enters the phrase `small cover'.
\end{remark}

The `smallest' of graph-associahedra corresponding to connected graphs is the well-known \textit{Stasheff associahedron}~$\As^n$. Here the word `smallest' can be understood in several senses. For instance,  among all graph-associahedra corresponding to connected graphs, the Stasheff associahedron has the smallest numbers of faces, and also $h$-numbers and $\gamma$-numbers, see~\cite{BuVo11}. For example, the number of facets of~$\Pe^n$ is equal to $2^{n+1}-2$, while the number of facets of~$\As^n$ is equal to $n(n+3)/2$. One of the URC-manifolds that will be constructed in this paper is the two-fold orientation covering~$\overline{M}_{\As^n,\clambda}$ of the small cover~$M_{\As^n,\clambda}$ of the Stasheff associahedron~$\As^n$ that corresponds to the canonical Delzant characteristic function~$\clambda$, see Section~\ref{section_defin} for details. This manifold is much `smaller' than the Tomei manifold, for instance, in the sense that it has smaller Betti numbers. Notice that the Betti numbers with coefficients in~$\Z_2$ of a small cover~$M_{P,\lambda}$ are independent of the choice of the  characteristic function~$\lambda$ and are equal to the $h$-numbers of~$P$, see~\cite{DaJa91}. The Betti numbers with coefficients in~$\Q$ of small covers are harder to compute, see Section~\ref{subsection_rational}. Up to now, in every dimension~$n\ge 4$, the manifold~$\overline{M}_{\As^n,\clambda}$ is the simplest known URC-manifold.

This paper is organized in the following way. In Section~\ref{section_defin} we give all necessary definitions and then formulate our main results, Theorem~\ref{theorem_main} and Corollary~\ref{cor_main}, which claim that for any graph-associahedron~$P_{\Gamma}$ corresponding to a connected graph~$\Gamma$ the following manifolds are URC-manifolds:
\begin{itemize}
\item the real moment-angle manifold over~$P_{\Gamma}$, 
\item all orientable small covers of~$P_{\Gamma}$,
\item the two-fold orientation coverings of all non-orientable small covers of~$P_{\Gamma}$. 
\end{itemize}
An explicit construction for realization of cycles providing the proofs of these results is contained in Sections~\ref{section_simplex}--\ref{section_proof}. Finally, in Section~\ref{section_small} we discuss how one can compare different URC-manifolds, and in what senses the newly constructed URC-manifolds are smaller than URC-manifolds known before.

\section{Definitions and main result}\label{section_defin}

\subsection{Small covers and real moment-angle manifolds} Recall that an  $n$-dimensional convex polytope $P\subset\R^n$ is said to be \textit{simple} if every its vertex is contained in exactly $n$ facets.

An important branch of modern algebraic geometry is theory of \textit{toric varieties}. Recall that a toric variety is a normal algebraic variety that contains a Zariski open subset isomorphic to the algebraic torus~$(\C^{\times})^n$ such that the action of this torus on itself extends to its action on the whole variety. It is well known that, for a projective toric variety~$X$, the quotient of~$X$ by the action of the compact torus~$T^n\subset(\C^{\times})^n$ is a simple polytope~$P^n$. Davis and Januszkiewicz~\cite{DaJa91} suggested a construction of topological analogs of toric varieties, i.\,e., of smooth topological manifolds~$M^{2n}$ with locally standard actions of the half-dimensional compact torus~$T^n$
such that $M^{2n}/T^n=P^n$ is a simple polytope. Today such manifolds are called \textit{quasi-toric manifolds}. Alongside with this construction, in~\cite{DaJa91} they also introduced a real version of it with the torus~$T^n$ replaced by its real analog, that is, by the group~$\Z_2^n$. The obtained manifolds are called \textit{small covers} of simple polytopes. (Here and further, we denote by~$\Z_2$ the cyclic group of order~$2$ and, unlike in~\cite{DaJa91}, use the additive notation for it.) In the construction of quasi-toric manifolds and small covers, an auxiliary role was played by certain special manifolds~$\mathcal{Z}_P$ and~$\CR_P$ with the actions of the groups~$T^m$ and~$\Z_2^m$, respectively, such that $\mathcal{Z}_P/T^m=P^n$ and $\CR_P/\Z_2^m=P^n$, where $m$ is the number of facets of~$P^n$.  Theory of these manifolds was developed in the works by Buchstaber and Panov, see~\cite{BuPa15}. It turned out that they have significant independent importance. Today they are called \textit{moment-angle manifolds} and \textit{real moment-angle manifolds}, respectively.
Below, among all constructions mentioned above, we shall need only the constructions of real moment-angle manifolds and small covers. Hence, we shall adduce only these two constructions. 

Let $P$ be an $n$-dimensional simple convex polytope with $m$ facets $F_1,\ldots,F_m$.  Consider the group~$\Z_2^m$ with the standard basis $a_1,\ldots,a_m$. We put,
$$
\CR_{P}=(P\times\Z_2^m)/\sim\,,
$$
where $\sim$ is the equivalence relation on~$P\times\Z_2^m$ such that $(x,g)\sim (x',g')$ if and only if $x=x'$ and the element $g-g'$ belongs to the subgroup of~$\Z_2^m$ generated by all~$a_i$ such that $x\in F_i$. We denote by~$[x,g]$ the point of~$\CR_P$ corresponding to the equivalence class of~$(x,g)$.

Now, consider the group~$\Z_2^n$. A homomorphism $\lambda\colon\Z_2^m\to\Z_2^n$ is called a \textit{characteristic function} if, for each vertex~$v$ of~$P$, the elements $\lambda(a_{i_1}),\ldots,\lambda(a_{i_n})$ corresponding to the facets $F_{i_1},\ldots,F_{i_n}$ containing~$v$ form a basis of~$\Z_2^n$. Recall that not every simple polytope admits a  characteristic function. To a pair $(P,\lambda)$ such that $P$ is a simple polytope and $\lambda$ is a characteristic function, one can assign the manifold
$$
M_{P,\lambda}=(P\times\Z_2^n)/\sim_{\lambda},
$$
where $\sim_{\lambda}$ is the equivalence relation on~$P\times\Z_2^n$ such that $(x,g)\sim_{\lambda} (x',g')$ if and only if  $x=x'$ and the element $g-g'$ belongs to the subgroup of~$\Z_2^n$ generated by all~$\lambda(a_i)$ such that $x\in F_i$. We denote by $[x,g]_{\lambda}$ the point of~$M_{P,\lambda}$ corresponding to the equivalence class of~$(x,g)$.

The manifold $\CR_{P}$ is called the  \textit{real moment-angle manifold\/} over the simple polytope~$P$, and the manifolds~$M_{P,\lambda}$ are called \textit{small covers\/} of~$P$. The manifold~$\CR_P$ is glued out of $2^m$ copies of~$P$ indexed by the elements of~$\Z_2^m$. Each manifold~$M_{P,\lambda}$ is glued out of $2^n$ copies of~$P$ indexed by the elements of~$\Z_2^n$. The copies of~$P$ corresponding to the elements~$g$ and~$g'$ are glued to each other along a facet~$F_i$ if and only if $g-g'=a_i$ for~$\CR_P$ and $g-g'=\lambda(a_i)$ for~$M_{P,\lambda}$. Herewith, the joining of the simple polytopes at every point in either~$\CR_P$ or~$M_{P,\lambda}$ is locally modeled by the joining of the coordinate orthants at a point in~$\R^n$. Therefore, $\CR_P$ and $M_{P,\lambda}$ are indeed topological manifolds. Any simple polytope~$P$ has the standard structure of a smooth manifold with corners, which induces smooth structures on the manifolds~$\CR_P$ and~$M_{P,\lambda}$. It is easy to see that, for any characteristic function~$\lambda$, the mapping $p_{\lambda}\colon \CR_P\to M_{P,\lambda}$ given by $[x,g]\mapsto [x,\lambda(g)]_{\lambda}$ is an $2^{m-n}$--fold regular covering. In other words, $M_{P,\lambda}=\CR_P/\ker\lambda$, where the group~$\Z_2^m$ and, hence, the subgroup~$\ker\lambda$ of it acts on~$\CR_P$ by $h\cdot [x,g]=[x,g+h]$.

A simple polytope~$P$ is called a \textit{flag polytope} if every set $F_1,\ldots,F_k$ of its pairwise intersecting facets has a non-empty intersection~$F_1\cap\cdots\cap F_k$. It follows from the results of Davis~\cite{Dav83} that small covers~$M_{P,\lambda}$ and the real moment-angle manifold~$\CR_P$ over a flag polytope~$P$ are aspherical manifolds, that is,  $\pi_i(M_{P,\lambda})=0$ and $\pi_i(\CR_P)=0$ whenever $i>1$. (The manifolds~$M_{P,\lambda}$ and~$\CR_P$ had not  been introduced in~\cite{Dav83},  but the manifold~$\CU_P$, which is the universal covering of them, was studied and was proved to be contractible.)

\begin{remark}
If one endows a simple polytope~$P$ with a natural structure of an orbifold, and defines properly the concept of the fundamental group of an orbifold (cf.~\cite[Section~13.2]{Thu02}), then the manifold~$\CR_P$ will turn out a \textit{universal Abelian covering} of the orbifold~$P$, i.\,e., the covering corresponding to the commutant of its fundamental group. Hence the manifold~$\CR_P$ is sometimes called the universal Abelian covering of~$P$. We prefer not to use this terminology, since in the present paper the word `covering' is already overloaded. The fundamental group of~$P$ treated as an orbifold is a right-angular Coxeter group. The commutants of such groups have been studied by Panov and Veryovkin~\cite{PaVe16}.  
\end{remark}

\subsection{Nestohedra and graph-associahedra}\label{subsection_as} 
We shall need to consider an important family of simple polytopes, which are called \textit{graph-associahedra}. These polytopes were introduced by Carr and Devados~\cite{CaDe05}. Their definition is based on the fundamental concept of a  \textit{building set}, which goes back to the paper~\cite{DCPr95} by De Concini and Procesi on the models of the complements of subspace arrangements in a vector space. The same polytopes were studied by Toledano Laredo~\cite{TL08} under the name \textit{De Concini--Procesi associahedra}. Graph-associahedra are representatives of an important wider class of simple polytopes called  \textit{nestohedra} and introduced in~\cite{FeSt05},~\cite{Pos09}.
(The term `nestohedron' was first used in~\cite{PRW08}.)

Let $V$ be a finite set. A \textit{building set} on~$V$ is a subset~$\CB$  of non-empty subsets $S\subseteq V$ satisfying the following conditions:
\begin{enumerate}
\item If $S_1, S_2\in \CB$ and $S_1\cap S_2\ne\varnothing$, then $S_1\cup S_2\in\CB$.
\item All one-elements subsets $\{i\}$, $i\in V$, belong to~$\CB$.
\end{enumerate}
A building set~$\CB$ is called \textit{connected\/} if $V\in\CB$.

In the present paper under a  \textit{graph\/} we always mean a finite simple graph, i.\,e., a finite graph without loops and multiple edges. Let  $\Gamma$ be a graph on the vertex set~$V$. For each subset $S\subseteq V$, we denote by~$\Gamma|_S$ the \textit{restriction\/} of~$\Gamma$ to~$S$, that is, the graph on the vertex set~$S$, such that every pair of vertices  $s_1,s_2\in S$ is connected by an edge in~$\Gamma|_S$ if and only if it is connected by an edge in~$\Gamma$. The \textit{graph building set\/} corresponding to~$\Gamma$ is the set~$\CB(\Gamma)$ consisting of all non-empty subsets $S\subseteq V$ such that the graph~$\Gamma|_{S}$ is connected. It is easy to see that conditions~1 and~2 are satisfied, that is, $\CB(\Gamma)$ is indeed a building set. Besides, the building set~$\CB(\Gamma)$ is connected if and only if the graph~$\Gamma$ is connected.

Recall that the  \textit{Minkowski sum\/} of subsets $P$ and~$Q$ of~$\R^k$ is the subset $P+Q$ of~$\R^k$ consisting of all vectors $p+q$ such that $p\in P$ and $q\in Q$. 

Consider the space~$\R^{n+1}$ with the basis $e_0,\ldots,e_n$ and with the coordinates $x_0,\ldots,x_n$ in this basis. The \textit{standard $n$-dimensional simplex\/} is the simplex $\Delta^n\subset\R^{n+1}$ with vertices $e_0,\ldots,e_n$. Equivalently, $\Delta^n$ is the set of all points $(x_0,\ldots,x_n)$ satisfying $\sum_{i=0}^nx_i=1$ and $x_i\ge0$ for all~$i$. For each non-empty subset $S\subseteq\{0,\ldots,n\}$, we denote by~$\Delta_S$ the face of~$\Delta^n$ spanned by all vertices~$e_i$ such that $i\in S$.

Let $\CB$ be a building set on the set $V=\{0,\ldots,n\}$. The \textit{nestohedron} corresponding to~$\CB$ is the polytope
$$
P_{\CB}=\sum_{S\in \CB}\Delta_S,
$$
where the sum is the Minkowski sum.  If $\CB=\CB(\Gamma)$ is the graph building set corresponding to a graph~$\Gamma$, then the nestohedron $P_{\Gamma}=P_{\CB(\Gamma)}$ is called a \textit{graph-associahedron}.  

In the following proposition we collect basic results of papers~\cite{FeSt05}, \cite{Pos09}, \cite{PRW08} on nestohedra in the interesting to us partial case of graph-associahedra corresponding to connected graphs.

\begin{propos} Let $\Gamma$ be a connected graph on the vertex set $V=\{0,\ldots,n\}$. Then the graph-associahedron~$P_{\Gamma}$ satisfies the following:
\begin{enumerate}
\item $P_{\Gamma}$ is an $n$-dimensional flag simple polytope lying in the hyperplane  $H\subset\R^{n+1}$ given by $\sum_{i=0}^nx_i=|\CB(\Gamma)|$.
\item  $P_{\Gamma}$ has $|\CB(\Gamma)|-1$ facets, which can be naturally indexed by the subsets $S\in\CB(\Gamma)\setminus\{V\}$. The facet~$F_S$ corresponding to a subset~$S$ lies in the intersection of~$H$ and the hyperplane given by $\sum_{i\in S}x_i=k_S$, where $k_S$ is the number of subsets $T\in\CB(\Gamma)$ such that $T\subseteq S$. 
\item Facets~$F_S$ and~$F_T$ intersect if and only if  $S\subseteq T$ or $T\subseteq S$ or $S\cap T=\varnothing$ and~$S\cup T\notin \CB(\Gamma)$.
\end{enumerate}
\end{propos}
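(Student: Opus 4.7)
Since the proposition collects known facts about nestohedra specialized to graph building sets, my plan is to combine a direct analysis of the Minkowski sum $P_\Gamma = \sum_{S\in\CB(\Gamma)}\Delta_S$ with the nestohedron machinery of \cite{FeSt05}, \cite{Pos09}, and \cite{PRW08}. Every structural claim should reduce to testing linear functionals against the individual summand simplices.

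For part (1), each $\Delta_S$ lies in the affine hyperplane $\{\sum_{i\in V}x_i = 1\}$, so by Minkowski addition $P_\Gamma$ lies in $H = \{\sum_{i\in V}x_i = |\CB(\Gamma)|\}$. Connectedness of $\Gamma$ gives $V\in\CB(\Gamma)$, hence $\Delta^n = \Delta_V$ appears as a summand, which forces $\dim P_\Gamma = n$ with affine hull exactly $H$. Simpleness of $P_\CB$ for an arbitrary building set $\CB$ is Postnikov's theorem in \cite{Pos09}, which I would quote rather than re-derive.

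For part (2), I would test the linear functional $\ell_S(x) = \sum_{i\in S}x_i$ against each summand: on $\Delta_T$ it equals $1$ identically when $T\subseteq S$, and attains minimum $0$ at any vertex $e_i$ with $i\in T\setminus S$ when $T\not\subseteq S$. Since the minimum of a linear functional on a Minkowski sum is the sum of the minima on the summands, the minimum of $\ell_S$ on $P_\Gamma$ equals $k_S$, and for $S\ne V$ this cuts out a proper face $F_S$. To see $F_S$ has codimension one, one invokes the product decomposition of nestohedron faces into a restriction and a contraction of $\CB(\Gamma)$ along $S$, having dimensions $|S|-1$ and $n-|S|$, as in \cite{Pos09}. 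A counting argument then shows these exhaust the facets, giving $|\CB(\Gamma)|-1$ in total.

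For part (3), the intersection criterion is a special case of the nested set complex description of the boundary of a nestohedron from \cite{FeSt05}, \cite{Pos09}: facets $F_{S_1},\ldots,F_{S_k}$ meet iff $\{S_1,\ldots,S_k\}$ is a nested set. Reading this off for $k=2$ yields exactly the stated trichotomy. Alternatively, one checks directly from the Minkowski picture when $\ell_S = k_S$ and $\ell_T = k_T$ can simultaneously attain their minima on $P_\Gamma$. The flag property then reduces to showing that, in $\CB(\Gamma)\setminus\{V\}$, pairwise nestedness implies nestedness; this is where the graph structure is used: if pairwise disjoint $S_1,\ldots,S_k$ in a pairwise compatible family had their union in $\CB(\Gamma)$, then $\Gamma|_{S_1\cup\cdots\cup S_k}$ would be connected, so some edge of $\Gamma$ would connect two of the $S_j$'s, placing $S_j\cup S_\ell$ in $\CB(\Gamma)$ and contradicting pairwise compatibility. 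The step I expect to be most delicate is verifying that every candidate $F_S$ is actually a facet rather than a lower-dimensional face; this is precisely the point at which the nestohedron product decomposition from \cite{Pos09} becomes indispensable, and re-proving it by hand would constitute the bulk of the work if one chose not to quote it.
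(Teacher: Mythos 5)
Your proposal is correct, and it fills in what the paper itself omits: the paper states this proposition as a compilation of known results from~\cite{FeSt05}, \cite{Pos09}, \cite{PRW08} and gives no proof at all. Your direct verifications are sound where you give them---the hyperplane constraint via additivity of the Minkowski sum over $\{\sum x_i=1\}$, the full dimensionality forced by the summand $\Delta_V$, the computation $\min_{P_\Gamma}\ell_S=k_S$ as the sum of per-summand minima, and the flag-property argument (pairwise disjoint, pairwise incompatible $S_i$'s cannot have a connected union, since connectedness of $\Gamma|_{\bigcup S_i}$ would force an edge between some $S_a,S_b$ and hence $S_a\cup S_b\in\CB(\Gamma)$). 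You are also right to flag the facet-dimension claim as the place where a citation to the restriction--contraction product decomposition of nestohedron faces is genuinely needed; re-deriving that from scratch would be the bulk of an unabridged proof. In short, your route is the natural one and matches the references the paper itself leans on.
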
 

We shall conveniently put~$\CB'(\Gamma)=\CB(\Gamma)\setminus\{V\}$. Then the facets of the graph-associahedron~$P_{\Gamma}$ are indexed by the elements of~$\CB'(\Gamma)$.

It is not hard to see that, for a disconnected graph~$\Gamma$ with connected components $\Gamma_1,\ldots,\Gamma_q$, the graph-associahedron~$P_{\Gamma}$ is combinatorially equivalent (and even isometric) to the direct product $P_{\Gamma_1}\times\cdots\times P_{\Gamma_q}$.

The most important are the following three series of graph-associahedra:

1. If $L_{n+1}$ is the path graph on the vertex set $\{0,\ldots,n\}$, i.\,e., the graph with the $n$ edges $\{0,1\}$, $\{1,2\},\ldots,$ $\{n-1,n\}$, then  $P_{L_{n+1}}$ is the usual \textit{associahedron\/}~$\As^n$, which is also called the  \textit{Stasheff polytope}. 

2. If $C_{n+1}$ is the cycle graph on the vertex set  $\{0,\ldots,n\}$,  i.\,e., the graph with the $n+1$ edges $\{0,1\}$, $\{1,2\},\ldots,$ $\{n-1,n\}$, $\{n,0\}$, then $P_{C_{n+1}}$ is  the  \textit{cyclohedron\/}~$\Cy^n$, which is also called the \textit{Bott--Taubes polytope}. 

3. If $K_{n+1}$ is the complete graph on the vertex set $\{0,\ldots,n\}$, then  $P_{K_{n+1}}$ is the \textit{permutohedron\/}~$\Pe^n$. 

\begin{remark}
Usually under a standard permutohedron one means the convex hull of the $(n+1)!$ points obtained by permutations of coordinates of the point $(1,2,3,\ldots,n+1)$. Nevertheless, the above permutohedron~$\Pe^n=P_{K_{n+1}}$ is the convex hull of the  $(n+1)!$ points obtained by permutations of coordinates of the point  $(1,2,4,\ldots,2^n)$. These two polytopes are combinatorially equivalent.
\end{remark}

For a connected graph~$\Gamma$, we construct a mapping $\pi_{\Gamma}\colon P_{\Gamma}\to\Delta^n$ in the following way. Let $K_{\Gamma}$ be the barycentric subdivision of~$P_{\Gamma}$. We map the barycentre of every face $F=F_{S_1}\cap\cdots\cap F_{S_k}$ of~$P_{\Gamma}$ to the barycentre of the face~$\Delta_{V\setminus(S_1\cup\cdots\cup S_k)}$ of~$\Delta^n$. In particular, we map the barycentre of~$P_{\Gamma}$ to the barycentre of~$\Delta^n$. (Notice that if $F_{S_1}\cap\cdots \cap F_{S_k}\ne\varnothing$, then the set $S_1\cup\cdots\cup S_k$ either coincides with one of the subsets~$S_i$ or does not belong to~$\CB(\Gamma)$, hence, never coincides with the whole set~$V$.) Further, we extend the mapping linearly to every simplex of~$K_{\Gamma}$, and denote the obtained mapping by~$\pi_{\Gamma}$.

\begin{propos}\label{propos_pi}
The mapping~$\pi_{\Gamma}$ satisfies the following:
\begin{enumerate}
\item $\pi_{\Gamma}(F_S)\subseteq\Delta_{V\setminus S}$ for all $S\in\CB'(\Gamma)$.

\item $\pi_{\Gamma}(\partial P_{\Gamma})=\partial\Delta^n$.

\item The mapping $\pi_{\Gamma}$ has degree~$1$, that is, takes the fundamental homology class of the pair~$(P_{\Gamma},\partial P_{\Gamma})$ to the fundamental homology class of the pair~$(\Delta^n,\partial\Delta^n)$.
\end{enumerate}
\end{propos}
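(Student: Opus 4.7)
For (1), the argument is direct from the definition: any face $F'\subseteq F_S$ of $P_\Gamma$ has the form $F'=F_S\cap F_{T_1}\cap\cdots\cap F_{T_k}$, so its barycentre is sent to the barycentre of $\Delta_{V\setminus(S\cup T_1\cup\cdots\cup T_k)}\subseteq\Delta_{V\setminus S}$. Since every simplex of $K_\Gamma$ contained in $F_S$ is a chain of such faces and its affine image lies in the convex set $\Delta_{V\setminus S}$, this establishes (1) and, as a consequence, the inclusion $\pi_\Gamma(\partial P_\Gamma)\subseteq\bigcup_{S\in\CB'(\Gamma)}\Delta_{V\setminus S}\subseteq\partial\Delta^n$, which gives one half of~(2).

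For (3), I would first recognise $\pi_\Gamma$ as a simplicial map from $K_\Gamma$ to the barycentric subdivision $K'$ of~$\Delta^n$: the assignment $F=F_{S_1}\cap\cdots\cap F_{S_k}\mapsto\Delta_{V\setminus(S_1\cup\cdots\cup S_k)}$ is order-preserving on face posets. Its degree can then be computed as the signed count of top simplices of $K_\Gamma$ that map bijectively onto a fixed top simplex $\tau$ of~$K'$. I would choose $\tau$ corresponding to a maximal chain $T_0\subsetneq T_1\subsetneq\cdots\subsetneq T_n=V$ with $|T_i|=i+1$, and write $\{v_i\}=T_i\setminus T_{i-1}$ (setting $T_{-1}=\varnothing$). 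A preimage top simplex then corresponds to a chain of faces $F'_0\subset F'_1\subset\cdots\subset F'_n=P_\Gamma$ with $\dim F'_i=i$ and with the union of the facet-indices of $F'_i$ equal to $\{v_{i+1},\ldots,v_n\}$.

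The heart of the proof is to show, by descending induction on $i$, that such a chain exists and is unique. At the $i$-th step one must adjoin a single facet $F_{C_i}$ with $v_{i+1}\in C_i\subseteq\{v_{i+1},\ldots,v_n\}$, and the unique valid choice is $C_i:=$ the connected component of $\Gamma|_{\{v_{i+1},\ldots,v_n\}}$ containing~$v_{i+1}$. Existence follows because $C_i$ is a nonempty proper connected vertex subset of~$V$ (hence $C_i\in\CB'(\Gamma)$), and the family $\{C_{n-1},\ldots,C_i\}$ can be checked, using the connected-component structure, to satisfy the combinatorial conditions for $\bigcap_j F_{C_j}$ to be a face of $P_\Gamma$ of the required codimension. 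Uniqueness is the main technical step: any other connected subset $S'$ with $v_{i+1}\in S'\subsetneq C_i$ would, by connectivity of $C_i$, admit an edge $\{w_0,w_1\}$ of $\Gamma$ with $w_1\in S'$ and $w_0\in C_i\setminus S'\subseteq\{v_{i+2},\ldots,v_n\}$; taking the member $S_0$ of $\{C_{n-1},\ldots,C_{i+1}\}$ that contains $w_0$, the pair $(S',S_0)$ is then forced to fail the intersection criterion in part~(3) of the preceding proposition, so $F_{S'}\cap F_{S_0}=\varnothing$; since $F'_{i+1}\subseteq F_{S_0}$, this precludes $F'_{i+1}\cap F_{S'}$ from being non-empty, ruling out $S'$.

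Choosing orientations on $P_\Gamma$ and $\Delta^n$ compatibly with the common normal $(1,\ldots,1)$ of their ambient hyperplanes, the resulting simplicial isomorphism $\sigma\to\tau$ (where $\sigma$ is the unique preimage top simplex) is orientation-preserving, so $\deg\pi_\Gamma=+1$ and (3) holds. The remaining inclusion in (2) follows automatically: the degree-one map of pairs $(P_\Gamma,\partial P_\Gamma)\to(\Delta^n,\partial\Delta^n)$ restricts by naturality to a degree-one, hence surjective, map $\partial P_\Gamma\to\partial\Delta^n$ of the closed oriented $(n-1)$-manifolds. The main obstacle is the inductive uniqueness step, which requires a careful application of the building set axioms.
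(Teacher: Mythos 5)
Your proposal is correct and reaches the same conclusion, but it takes a genuinely different and more laborious route in part~(3). The paper does not pick an arbitrary top simplex~$\tau$ of the barycentric subdivision of~$\Delta^n$; it first chooses a nested chain $S_1\subset\cdots\subset S_n$ of \emph{connected} subsets with $|S_i|=i$ (such a chain exists because~$\Gamma$ is connected), and takes~$\tau$ to be the simplex on the barycentres of $\Delta^n\supset\Delta_{V\setminus S_1}\supset\cdots\supset\Delta_{V\setminus S_n}$. With this choice the preimage flag is simply $P_{\Gamma}\supset F_{S_1}\supset F_{S_1}\cap F_{S_2}\supset\cdots$, so existence is immediate; uniqueness is then a short contradiction: a second preimage flag would have facet-index sets $S_1',\ldots,S_n'$ with $S_1'\cup\cdots\cup S_i'=S_i$, and at the first index~$i$ where $S_i'\ne S_i$ the facets~$F_{S_{i-1}}$ and~$F_{S_i'}$ fail the intersection criterion. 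By contrast, you allow~$\tau$ to come from an arbitrary maximal chain $T_0\subsetneq\cdots\subsetneq T_n$, so the sets $V\setminus T_i$ need not be connected, and you are forced to identify the next facet as a connected component $C_i$ of $\Gamma|_{\{v_{i+1},\ldots,v_n\}}$ and to argue both existence and uniqueness by hand. Your argument is sound, but two steps are asserted rather than carried out: (a) that the family $\{C_{n-1},\ldots,C_i\}$ does give a nonempty face requires checking pairwise intersections via the connected-component structure \emph{and} invoking flagness of~$P_{\Gamma}$ to upgrade pairwise to simultaneous intersection; and (b) the claim that $(S',S_0)$ ``fails the intersection criterion'' needs the case split $S'\cap S_0\ne\varnothing$ (then neither contains the other, so the facets are disjoint) versus $S'\cap S_0=\varnothing$ (then the edge $\{w_0,w_1\}$ makes $S'\cup S_0$ connected, hence in $\CB(\Gamma)$, so again the facets are disjoint). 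Both gaps are fillable exactly as indicated, so the approach works; what the paper's choice of~$\tau$ buys is precisely the avoidance of this connected-component bookkeeping. Parts~(1) and~(2) match the paper.
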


\begin{proof}
Property~1 follows immediately from the construction of~$\pi_{\Gamma}$. The inclusion~$\pi_{\Gamma}(\partial P_{\Gamma})\subseteq\partial\Delta^n$ follows from property~1. 

Let us prove property~3. It follows immediately from the construction that $\pi_{\Gamma}$  is a simplicial mapping of the barycentric subdivision of~$P_{\Gamma}$ to the barycentric subdivision of~$\Delta^n$, that is, $\pi_{\Gamma}$ maps every simplex of~$K_{\Gamma}$ linearly onto a simplex of the barycentric subdivision of~$\Delta^n$. Obviously, there exist subsets $S_1\subset \cdots\subset S_n\subset V$ such that $|S_i|=i$ and the graphs~$\Gamma|_{S_i}$ are connected, i.\,e., $S_i\in\CB'(\Gamma)$. Denote by~$u_0$ the barycentre of the graph-associahedron~$P_{\Gamma}$, and denote by $u_1,\ldots,u_n$ the barcentres of its faces $F_{S_1}$, $F_{S_1}\cap F_{S_2},\ldots$, $F_{S_1}\cap\cdots\cap F_{S_n}$, respectively. Denote by~$v_0$ the barycentre of the simplex~$\Delta^n$, and denote by $v_1,\ldots,v_n$ the barycentres of its faces $\Delta_{V\setminus S_1},\ldots,\Delta_{V\setminus S_n}$, respectively. Then $\pi_{\Gamma}(u_i)=v_i$, $i=0,\ldots,n$, hence,  $\pi_{\Gamma}$ maps isomorphically the simplex~$\sigma$ with vertices $u_0,\ldots,u_n$ onto the simplex~$\tau$ with vertices $v_0,\ldots,v_n$. Besides, it can be checked immediately that this affine isomorphism preserves the orientation. (The simplex~$\Delta^n$  and the graph-associahedron~$P_{\Gamma}$ lie in parallel hyperplanes, hence, the standard orientation of~$\Delta^n$ induces the orientation of~$P_{\Gamma}$.) Therefore, to prove that~$\pi_{\Gamma}$ has degree~$1$, we suffice to show that no other $n$-dimensional simplex~$\sigma'$ of~$K_{\Gamma}$ is mapped isomorphically onto~$\tau$. Assume the converse, that is, assume that $K_{\Gamma}$ contains a simplex~$\sigma'\ne\sigma$  with vertices $u_0',\ldots,u_n'$ such that $\pi_{\Gamma}(u_i')=v_i$ for all~$i$. Then $u_0'=u_0$ and there exist facets $F_{S_1'},\ldots,F_{S_n'}$ of~$P_{\Gamma}$ such that $u_i'$ is the barycentre of~$F_{S_1'}\cap\cdots\cap F_{S_i'}$ for $i=1,\ldots,n$. Since $\pi_{\Gamma}(u_i')=v_i$, we see that $S_1'\cup\cdots\cup S_i'=S_i$ for all~$i$. In particular, $S_1'=S_1$. Take the smallest~$i$ such that $S_i'\ne S_i$. Then $S_{i-1}'\cup S_i'=S_i\in\CB(\Gamma)$ and neither of the sets~$S_{i-1}'=S_{i-1}$ and~$S_i'$ is contained in the other. Hence, the facets~$F_{S_{i-1}'}$ and~$F_{S_i'}$ do not intersect, which is impossible. Therefore, $S_i'=S_i$ for all~$i$, that is, $\sigma'=\sigma$, which completes the proof of property~3. 

Since the degree of~$\pi_{\Gamma}$ is non-zero, we obtain that the inclusion $\pi_{\Gamma}(\partial P_{\Gamma})\subseteq\partial\Delta^n$ is not strict.
\end{proof}

Recall that an $n$-dimensional simple polytope $P\subset\R^n$ is called a \textit{Delzant polytope} if, for each its vertex~$p$, there exist integral normal vectors to the facets of~$P$ containing~$p$ that form a $\Z$-basis of the standard lattice $\Z^n\subset\R^n$. For each $n$-dimensional Delzant polytope~$P$ with $m$ facets $F_1,\ldots,F_m$, there is a canonical characteristic function $\clambda\colon\Z_2^m\to\Z_2^n$ such that, for each $i$, the value~$\clambda(a_i)$ is the primitive integral normal vector to~$F_i$ reduced modulo~$2$. To each Delzant polytope~$P$ is assigned a smooth projective toric variety, and the small cover~$M_{P,\clambda}$ corresponding to the  canonical characteristic function described above is the set of real points of this projective variety, see~\cite{BuPa15}.

For any connected graph~$\Gamma$ on the vertex set $V=\{0,\ldots,n\}$, the corresponding graph-associahedron~$P_{\Gamma}$ becomes Delzant if one identifies the hyperplane~$H$ containing~$P_{\Gamma}$ with the space~$\R^n$ spanned by the vectors $e_1,\ldots,e_n$ by means of the coordinate projection $\R^{n+1}\to\R^n$. Since facets of~$P_{\Gamma}$ are indexed by the elements~$S\in\CB'(\Gamma)$, we shall conveniently denote the basis element of~$\Z_2^m=\Z_2^{|\CB'(\Gamma)|}$ corresponding to the facet~$F_S$ by~$a_S$. Then the canonical characteristic function $\clambda\colon\Z_2^m\to\Z_2^n$ yielded by the Delzant structure on~$P_{\Gamma}$ is given by
$$
\clambda(a_S)=\left\{
\begin{aligned}
&\sum_{i\in S}b_i&&\text{if $0\notin S$,}\\
&\sum_{i\in V\setminus S}b_i&&\text{if $0\in S$,}
\end{aligned}
\right.
$$
where $b_1,\ldots,b_n$ is the standard basis of~$\Z_2^n$.  

Notice that graph-associahedra may admit other characteristic functions. For instance, the above mentioned Tomei manifold~$M^n_0$ is the small cover of the permutohedron~$\Pe^n=P_{K_{n+1}}$ corresponding to the characteristic function~$\lambda_0$ given by
$\lambda_0(a_S)=b_{|S|}$. 

\subsection{Main result} \label{subsection_result}

\begin{theorem}\label{theorem_main}
For any connected graph~$\Gamma$, the real moment-angle manifold~$\CR_{P_{\Gamma}}$  is a URC-manifold.
\end{theorem}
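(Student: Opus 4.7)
The plan is to generalize the author's constructive realization-of-cycles scheme from~\cite{Gai07},~\cite{Gai08a},~\cite{Gai08b},~\cite{Gai13}, using the degree-one projection $\pi_{\Gamma}\colon P_{\Gamma}\to\Delta^n$ of Proposition~\ref{propos_pi} as the geometric bridge from the graph-associahedron to the standard simplex. The key observation that makes graph-associahedra amenable to this scheme is part~(1) of Proposition~\ref{propos_pi}: among the facets of $P_{\Gamma}$, only the $n+1$ facets $F_{\{j\}}$ have $\pi_{\Gamma}$-images of codimension one in $\Delta^n$; all other facets $F_S$ with $|S|\ge 2$ are sent into codimension-$\ge 2$ faces.

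Starting from a class $z\in H_n(X;\Z)$, I would first reduce to a convenient representative: after replacing $X$ by a simplicial approximation and $z$ by a suitable multiple $kz$, represent $kz$ by an ordered simplicial cycle $c=\sum_\alpha n_\alpha\sigma_\alpha$ in which each singular simplex $\sigma_\alpha\colon\Delta^n\to X$ carries a consistent labelling of its vertices by $V=\{0,\ldots,n\}$, and $\partial c=0$ in the ordered chain complex. I would then assemble the realising manifold $\widehat{M}$ as follows: for each $\sigma_\alpha$ take a copy of $\CR_{P_{\Gamma}}$, cut it along the preimages of the facets $F_{\{j\}}$, and re-glue the resulting pieces across their cut boundaries whenever $c$ identifies the corresponding $j$-th faces of adjacent simplices, matching the $\Z_2^m$-colour data so as to preserve the internal moment-angle identifications along the $F_S$ with $|S|\ge 2$. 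Define $f\colon\widehat{M}\to X$ piecewise by $\sigma_\alpha\circ\pi_{\Gamma}\circ q_\alpha$, where $q_\alpha$ is the quotient map from the piece to $P_{\Gamma}$. The natural projection $\widehat{M}\to\CR_{P_{\Gamma}}$ that sends each piece back to its un-cut ancestor is a local homeomorphism by construction, hence a finite-fold covering, and Proposition~\ref{propos_pi}(3) together with a degree count on the pieces gives $f_*[\widehat{M}]=\pm kz$.

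The main technical obstacle is ensuring that $\widehat{M}$ is a genuine manifold rather than a branched cover. The internal $\Z_2^m$-action inside each piece automatically resolves ramifications that would project to codimension-$\ge 2$ faces of $\Delta^n$, precisely because all facets $F_S$ with $|S|\ge 2$ map there by Proposition~\ref{propos_pi}(1). Along codimension-one strata (the shared $(n-1)$-faces of the simplices $\sigma_\alpha$) the cycle $c$ must be refined so that the number of sheets with compatible orientations and $\Z_2^m$-colours agrees on the two sides of every face. Producing such a refinement, and the accompanying enlargement of the multiplicity $k$, is the combinatorial heart of the argument and is where the bulk of the work, modelled on~\cite{Gai07} and elaborated in Sections~\ref{section_simplex}--\ref{section_proof}, will lie; the essential novelty compared to those earlier papers is the need to keep track of the $\Z_2^m$-colour carried by each piece so that the covering $\widehat{M}\to\CR_{P_{\Gamma}}$ is genuinely unramified.
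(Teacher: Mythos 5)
Your high-level plan is in the right spirit: using the degree-one projection $\pi_{\Gamma}\colon P_{\Gamma}\to\Delta^n$ and distinguishing the codimension-one facets $F_{\{j\}}$ from the deeper facets $F_S$ with $|S|\ge 2$ is indeed the geometric backbone of the paper's argument, and your ``cut along the $F_{\{j\}}$ and re-glue'' picture is a reasonable intuitive description of the manifold $N=(P_{\Gamma}\times\Omega)/\sim$ built in Section~\ref{section_proof}. However, there is a genuine gap at exactly the point you flag: the claim that the internal $\Z_2^m$-action ``automatically resolves ramifications that would project to codimension-$\ge 2$ faces of $\Delta^n$'' is false, and curing those potential ramifications is where the paper's actual content lies.

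Concretely, two facets $F_{\{i\}}$ and $F_{\{j\}}$ of $P_{\Gamma}$ intersect exactly when $i$ and $j$ are not adjacent in $\Gamma$, and along the preimage of $F_{\{i\}}\cap F_{\{j\}}$ the two re-gluing involutions $\xi_i$ and $\xi_j$ must commute for $\widehat{M}$ to be a manifold rather than a branched cover. This commutativity does not come for free from the ordered cycle $c$: it is precisely condition~$(*)$ of Proposition~\ref{propos_subd}, requiring that every $(n-2)$-simplex of the subdivided cycle whose two missing colors are non-adjacent in $\Gamma$ lie in exactly four top-dimensional simplices. Constructing such a subdivision is the key technical step, achieved by a nontrivial induction on the graph in Lemma~\ref{lem_subd} using joins of cross-polytope boundaries; for $\Gamma=K_{n+1}$ the condition is vacuous, which is why it never surfaced in the permutohedral case treated in the earlier papers you cite. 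Moreover, even once~$(*)$ is secured, the re-gluings along nested facets $F_{S_1}\subset F_{S_2}$ (which also intersect) must commute; the paper secures this not by tracking only the $\Z_2^m$-color, as you propose, but by enlarging the piece-data with the auxiliary factor $\prod_{T\in\CB'(\Gamma)}\CI_T$ of conjugated reflections in Section~\ref{section_constr}, tuned so that Proposition~\ref{propos_phi_commute} holds. Without both of these ingredients, the scheme you describe produces a branched cover of $\CR_{P_{\Gamma}}$ rather than the required finite-fold covering.
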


We shall prove this theorem in Section~\ref{section_proof} after we describe two auxiliary constructions necessary for it in Sections~\ref{section_simplex} and~\ref{section_constr}.

If a graph $\Gamma$ is the disjoint union of two graphs~$\Gamma_1$ and~$\Gamma_2$, then $P_{\Gamma}=P_{\Gamma_1}\times P_{\Gamma_2}$ and $\CR_{P_{\Gamma}}=\CR_{P_{\Gamma_1}}\times \CR_{P_{\Gamma_2}}$. 
In particular, adding an isolated vertex to a graph, we do not change the polytope~$P_{\Gamma}$. Kotschick and L\"oh~\cite{KoLo09} proved that in every dimension  $n\ge 2$ there exist many examples of oriented closed manifolds, e.\,g., all manifolds on strictly negative curvature, that cannot be dominated by a product of two manifolds of positive dimensions. Hence, if $\Gamma$ is a disconnected graph containing at least two connected components that are not isolated vertices, then $\CR_{P_{\Gamma}}$ is not a URC-manifold.

It follows immediately from the definition of a URC-manifold that if  $M_1^n$ and~$M_2^n$ are oriented connected closed manifolds and $M_1^n$ is a finite-fold covering of~$M_2^n$, then $M_1^n$ is a URC-manifold if and only if $M^n_2$ is a URC-manifold.

\begin{cor}\label{cor_main}
Suppose that $\Gamma$ is a connected graph and $\lambda$ is a characteristic function for~$P_{\Gamma}$. Then the small cover~$M_{P_{\Gamma},\lambda}$ is a URC-manifold whenever it is orientable, and if $M_{P_{\Gamma},\lambda}$ is non-orientable, then the two-fold orientation covering~$\overline{M}_{P_{\Gamma},\lambda}$ of it is a URC-manifold.
\end{cor}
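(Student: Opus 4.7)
The plan is to deduce the corollary directly from Theorem~\ref{theorem_main} together with the principle, stated in the paragraph just above, that the URC property transfers along finite-fold coverings of oriented connected closed manifolds in both directions. The key object is the regular $2^{m-n}$-fold covering $p_{\lambda}\colon \CR_{P_{\Gamma}}\to M_{P_{\Gamma},\lambda}$ constructed in Section~\ref{section_defin}.

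First, I would record the topological features of the source. The real moment-angle manifold $\CR_{P_{\Gamma}}$ is connected, since the basis elements $a_S$, $S\in\CB'(\Gamma)$, generate $\Z_2^m$, so any two copies of $P_{\Gamma}$ in the gluing $(P_{\Gamma}\times\Z_2^m)/\sim$ are joined by a chain crossing facets. By Theorem~\ref{theorem_main} it is in particular oriented, connected and closed. The manifold $M_{P_{\Gamma},\lambda}$ is then automatically connected, being the image of $\CR_{P_{\Gamma}}$.

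Next, I would split into two cases. If $M_{P_{\Gamma},\lambda}$ is orientable, pick any orientation and endow $\CR_{P_{\Gamma}}$ with the induced orientation, so that $p_{\lambda}$ becomes a finite-fold covering of oriented connected closed manifolds in the sense used in the paper (the resulting orientation of $\CR_{P_{\Gamma}}$ differs from its URC-orientation by at most a global sign, which is irrelevant for the URC property). By Theorem~\ref{theorem_main} the total space is URC, so the cited principle forces the base $M_{P_{\Gamma},\lambda}$ to be URC as well.

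Suppose instead that $M_{P_{\Gamma},\lambda}$ is non-orientable, and let $q\colon \overline{M}_{P_{\Gamma},\lambda}\to M_{P_{\Gamma},\lambda}$ be the orientation double covering; $\overline{M}_{P_{\Gamma},\lambda}$ is an oriented connected closed manifold. Because $\CR_{P_{\Gamma}}$ is orientable, the first Stiefel--Whitney class of $M_{P_{\Gamma},\lambda}$ pulls back to zero under $p_{\lambda}$, equivalently the classifying homomorphism $\pi_1(M_{P_{\Gamma},\lambda})\to\Z_2$ of $q$ is trivial on $(p_{\lambda})_*\pi_1(\CR_{P_{\Gamma}})$. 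Hence $p_{\lambda}$ lifts through $q$ to a finite-fold covering $\CR_{P_{\Gamma}}\to\overline{M}_{P_{\Gamma},\lambda}$, which, oriented compatibly as in the first case, is a covering of oriented connected closed manifolds. Applying the principle once more, $\overline{M}_{P_{\Gamma},\lambda}$ is a URC-manifold.

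There is no serious obstacle here: the only point requiring care is the orientation bookkeeping in the non-orientable case, ensuring that the lifted map $\CR_{P_{\Gamma}}\to\overline{M}_{P_{\Gamma},\lambda}$ really is a covering of oriented manifolds in the precise sense the URC-covering principle requires. Once this is in place, the corollary follows from Theorem~\ref{theorem_main} by one application of the principle per case.
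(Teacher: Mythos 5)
Your proof is correct and follows exactly the route the paper intends: the paper proves Theorem~\ref{theorem_main} for~$\CR_{P_{\Gamma}}$, records the transfer principle for URC-manifolds along finite-fold coverings, and leaves the corollary as an immediate consequence via the covering $p_{\lambda}\colon \CR_{P_{\Gamma}}\to M_{P_{\Gamma},\lambda}$ (lifted through the orientation double covering in the non-orientable case, which works precisely because $\CR_{P_{\Gamma}}$ is orientable). You have merely filled in the standard covering-space bookkeeping that the paper takes for granted.
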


It is easy to check that the small cover~$M_{P,\lambda}$ is orientable if and only if  $\lambda(a_{i_1})+\cdots+\lambda(a_{i_{2k+1}})\ne 0$ for every set of an odd number of indices $i_1,\ldots,i_{2k+1}$. This  implies immediately that, for any connected graph~$\Gamma$ with at least three vertices, the manifold~$M_{P_{\Gamma},\clambda}$ is non-orientable.

\begin{cor}
Suppose that $\Gamma$ is a connected graph with at least three vertices. Then the manifold~$\overline{M}_{P_{\Gamma},\clambda}$ is a URC-manifold.
\end{cor}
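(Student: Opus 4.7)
The plan is to reduce the corollary to verifying that the small cover $M_{P_\Gamma,\clambda}$ is non-orientable, so that Corollary~\ref{cor_main} applies directly. By the orientability criterion recalled immediately before the corollary, non-orientability is equivalent to the existence of an odd-sized collection of distinct subsets $S_1,\dots,S_{2k+1}\in\CB'(\Gamma)$ satisfying $\clambda(a_{S_1})+\cdots+\clambda(a_{S_{2k+1}})=0$ in $\Z_2^n$. I would produce such a collection by direct computation from the explicit formula
$$
\clambda(a_S)=\sum_{i\in S}b_i\quad(0\notin S),\qquad \clambda(a_S)=\sum_{i\in V\setminus S}b_i\quad(0\in S),
$$
where $V=\{0,1,\dots,n\}$ and the hypothesis gives $n\ge 2$.

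I would split according to the parity of $n+1=|V|$. First suppose $n+1$ is odd. Then I take the family of all singletons $\{0\},\{1\},\dots,\{n\}$; each one-element subset is automatically in $\CB'(\Gamma)$, because single vertices induce connected subgraphs and because $n+1\ge 3$ ensures each singleton is a proper subset of $V$. Their $\clambda$-values sum to $(b_1+\cdots+b_n)+(b_1+\cdots+b_n)=0$, giving the desired odd-sized relation. Now suppose $n+1$ is even, so $n\ge 3$. Using connectedness of $\Gamma$ I pick a vertex $i\ne 0$ adjacent to $0$ and set $S=\{0,i\}$; the induced graph $\Gamma|_S$ is a single edge, hence connected, and $|S|=2<n+1$, so $S\in\CB'(\Gamma)$. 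Combining $S$ with the $n-1$ singletons $\{j\}$ for $j\in V\setminus S$ produces $n$ distinct facets (an odd number), and the corresponding $\clambda$-values sum to
$$
\sum_{j\in V\setminus S}b_j\;+\;\sum_{j\in V\setminus S}b_j\;=\;0.
$$

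The only points that require attention are the parity bookkeeping and the verification that every chosen subset genuinely belongs to $\CB'(\Gamma)$; both become routine once the hypothesis $|V|\ge 3$ is used to guarantee proper containment in $V$ and to produce the edge at vertex $0$. I do not foresee a substantive obstacle: after this explicit non-orientability check, the assertion is immediate from Theorem~\ref{theorem_main} via Corollary~\ref{cor_main}.
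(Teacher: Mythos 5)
Your argument is correct and follows the same route as the paper: verify that $M_{P_{\Gamma},\clambda}$ is non-orientable by exhibiting an odd-sized set of facets whose $\clambda$-values sum to zero, then invoke Corollary~\ref{cor_main}. The paper dismisses the non-orientability as ``immediate''; you supply the explicit odd relation, and your case split on the parity of $n+1$ is sound (though one could also avoid it entirely by always taking the three distinct facets $F_{\{0\}}$, $F_{\{i\}}$, $F_{\{0,i\}}$ for a neighbour $i$ of $0$, whose $\clambda$-values sum to zero for any $n\ge 2$).
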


\section{A subdivision corresponding to a finite graph of a simplicial cell pseudo-manifold}\label{section_simplex}

In this section we give a construction of a special subdivision of a simplicial cell pseudo-manifold. This subdivision will be used in Section~\ref{section_proof} in the proof of Theorem~\ref{theorem_main}.

\begin{defin}
A (\textit{finite}) \textit{simplicial cell complex\/} is a quotient of the disjoint union of a finite set of simplices~$\Delta_1,\ldots,\Delta_N$ (possibly, of different dimensions) by an equivalence relation~$\sim$ such that
\begin{enumerate}
\item The relation~$\sim$ identifies no two distinct points of the same simplex~$\Delta_i$.
\item If $x\in
\Delta_i$, $x'\in \Delta_j$, and $x\sim x'$, then the relation~$\sim$ identifies a face $F\subseteq
\Delta_i$ containing~$x$ with a face $F'\subseteq \Delta_j$ containing~$x'$ by an affine isomorphism. (The simplex is also supposed to be a face of itself.)
\end{enumerate}
The images of faces of the simplices~$\Delta_i$ under the quotient mapping are called  {\it simplices} or {\it faces} of the obtained simplicial cell complex.
\end{defin}

The difference between a simplicial cell complex and a simplicial complex consists in the fact that in a simplicial complex the intersection of any two simplices is either empty or is a face of either simplex, while in a simplicial cell complex two simplices can have several common faces. For instance, a decomposition of a circle into two arcs is a simplicial cell complex but  not a simplicial complex. However, notice that the standard cell decomposition of a circle with a single one-dimensional cell is not a simplicial cell complex, since condition 1 is violated. All necessary facts about simplicial cell complexes can be found in~\cite{BuPa04}.

The set of vertices of a simplicial cell complex~$Z$ will be denoted by~$V(Z)$. The subcomplex of~$Z$ consisting of all its simplices of dimensions less than or equal to~$k$ is called the $k$-\textit{skeleton} of~$Z$ and is denoted by~$\Sk^k(Z)$. A  \textit{regular colouring\/} of vertices of a simplicial cell complex~$Z$ in colours in a set~$A$ is a mapping $C\colon V(Z)\to A$ such that $C(u)\ne C(v)$ for every two vertices~$u$ and~$v$ connected by an edge. For a simplex~$\rho$ of~$Z$ we denote by~$C(\rho)$ the set of colours of vertices of~$\rho$. For~$A$ we shall often take the vertex set~$V(\Gamma)$ of a finite graph~$\Gamma$. Hence colours of vertices of the complex will be vertices of the graph. This should not lead to a confusion.

A simplicial cell complex~$Z$ is called an  $n$-dimensional \textit{simplicial cell pseudo-manifold} if every simplex of~$Z$ is contained in an $n$-dimensional simplex of~$Z$, and any $(n-1)$-dimensional simplex of~$Z$ is contained in exactly two  $n$-dimensional simplices of~$Z$. Equivalently, an  $n$-dimensional simplicial cell complex~$Z$ is a pseudo-manifold if and only if $Z\setminus\Sk^{n-2}(Z)$ is a  (non-compact) manifold without boundary. A pseudo-manifold~$Z$ is said to be \textit{oriented} if the $n$-dimensional simplices of it are endowed with compatible orientations. 

By definition, a \textit{topological subdivision\/} of a simplicial cell complex~$Z$ is a pair~$(Y,h)$ such that $Y$ is a simplicial cell complex and $h\colon Y\to Z$ is a piecewise linear homeomorphism such that the pre-image~$h^{-1}(\rho)$ of every simplex~$\rho$ of~$Z$ is a subcomplex of~$Y$. Similarly, a  \textit{topological subdivision\/} of a convex polytope~$Q$ is a pair~$(Y,h)$ such that $Y$ is a simplicial cell complex and $h\colon Y\to Q$ is a piecewise linear homeomorphism such that the pre-image~$h^{-1}(F)$ of every face~$F$ of~$Q$ is a subcomplex of~$Y$. The introduced concept of a topological subdivision should be distinguished from a more common and more restrictive concept of a \textit{geometric subdivision}. In the definition of a geometric subdivision simplices of~$Z$ (or the polytope~$Q$) are  required to be decomposed into true convex simplices rather than into their images under a piecewise linear homeomorphism.

For a polytope~$Q$, we denote by~$\Int Q$ the \textit{relative interior\/} of it, that is, the interior of~$Q$ in the affine hull of it. If $Q$ is a point, then $\Int Q=Q$.

\begin{propos}\label{propos_subd}
Suppose that $\Gamma$ is a connected graph on the vertex set $\{0,\ldots,n\}$. Then any $n$-dimensional simplicial cell pseudo-manifold~$Z$ has a topological subdivision~$(Y,h)$ with a regular colouring of vertices  $C\colon V(Y)\to\{0,\ldots,n\}$ satisfying the following condition:
\begin{itemize}
\item[$(*)$] Every $(n-2)$-dimensional simplex~$\rho$ of~$Y$ such that the two elements of the two-element set $\{0,\ldots,n\}\setminus C(\rho)$ are not connected by an edge in~$\Gamma$ is contained in exactly four $n$-dimensional simplices of~$Y$. 
\end{itemize}
\end{propos}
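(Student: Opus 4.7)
The conditions are local, so I would try to reduce the statement to producing a compatible family of templates: for every connected graph $\Gamma$ on $\{0,\ldots,n\}$, a topological subdivision $(Y_\Gamma,h_\Gamma)$ of the standard $n$-simplex $\Delta^n$ together with a regular colouring $C_\Gamma\colon V(Y_\Gamma)\to\{0,\ldots,n\}$ with $C_\Gamma(e_i)=i$, such that on every proper face $\Delta_S$ the template agrees with $(Y_{\Gamma|_S},C_{\Gamma|_S})$, every interior $(n-2)$-simplex of $Y_\Gamma$ whose missing colour pair is a non-edge of $\Gamma$ lies in exactly four $n$-simplices of $Y_\Gamma$, and every $(n-2)$-simplex of $Y_\Gamma$ on an $(n-2)$-face of $\partial\Delta^n$ lies in exactly two $n$-simplices. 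Given such a family, one first barycentrically subdivides $Z$ (to control the size of the link of every $(n-2)$-simplex of $Z$ in terms of pairs of adjacent $n$-simplices) and then places one copy of $Y_\Gamma$ inside each $n$-simplex of the refined $Z$: the link of each $(n-2)$-simplex of the resulting $Y$ is either wholly interior to a single template (controlled by the interior clause) or assembled from exactly two templates meeting along an $(n-1)$-face, giving length $2+2=4$ by the boundary clause.

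The natural first candidate for $Y_\Gamma$ is the barycentric subdivision of $\Delta^n$ with the dimension colouring $b_\sigma\mapsto\dim\sigma$. Every $n$-simplex is then rainbow; every boundary $(n-2)$-simplex is in exactly two $n$-simplices (as required); and every interior $(n-2)$-simplex whose missing colour pair is $\{i,j\}$, $i<j$, has link of length $4$ when $j-i\ge 2$ and of length $6$ when $j=i+1$. So $(*)$ already holds whenever all non-edges of $\Gamma$ satisfy $|i-j|\ge 2$ (for example when $\Gamma$ contains the Hamiltonian path $0\mathrm{-}1\mathrm{-}\cdots\mathrm{-}n$). For a general connected $\Gamma$ there are non-edges of the form $\{i,i+1\}$, and since a topological subdivision can only refine, an already-existing $6$-cycle cannot be shrunk. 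So the template must be built from the start in such a way that the offending $6$-cycles are not created.

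The main obstacle is the combinatorial construction of such a refined template, which I would carry out by induction on $n$. For the inductive step, I would choose a non-cut vertex $v$ of $\Gamma$, so that $\Gamma':=\Gamma|_{V\setminus\{v\}}$ is still connected; such a $v$ exists in every connected graph on at least two vertices. By the inductive hypothesis one has templates $Y_{\Gamma|_S}$ on every proper face $\Delta_S$ of $\Delta^n$, and in particular $Y_{\Gamma'}$ on the facet $\Delta_{V\setminus\{v\}}$ opposite $e_v$. I would extend the boundary templates to a template on $\Delta^n$ by coning from $e_v$, and then, for every vertex $w$ of $Y_{\Gamma'}$ whose colour $u$ is not adjacent to $v$ in $\Gamma$, I would insert one extra vertex on the cone-edge $[e_v,w]$, coloured by some chosen neighbour of $v$ in $\Gamma$ (which exists because $\Gamma$ is connected). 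The effect of each insertion is to split what would otherwise be a $6$-cycle around a new interior $(n-2)$-simplex with missing pair $\{v,u\}$ into two $4$-cycles, enforcing $(*)$ for pairs of the form $\{v,u\}$ with $u\not\sim v$; for pairs not involving $v$, $(*)$ follows from the inductive hypothesis applied to $Y_{\Gamma'}$. The non-cut choice of $v$ and the connectedness of $\Gamma$ are exactly what is needed to make the inductive bookkeeping close up, and this simultaneous verification of $(*)$ and of the face-compatibility relations is the combinatorial heart of the argument.
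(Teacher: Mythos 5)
Your plan is in the same general spirit as the paper's proof (induction on $n$, a coning step, then placing one template per $n$-simplex of a barycentrically subdivided $Z$), but the specific construction you sketch has several gaps, and the paper's lemma is set up quite differently precisely to sidestep them.

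The first gap is in the compatibility requirement you impose. You ask for templates $(Y_\Gamma, C_\Gamma)$ for each connected $\Gamma$ whose restriction to a face $\Delta_S$ equals $(Y_{\Gamma|_S}, C_{\Gamma|_S})$. But $\Gamma|_S$ is typically disconnected, so the inductive hypothesis provides no template there, and your inductive step (delete a non-cut vertex so that $\Gamma'$ stays connected) cannot reach those restrictions. The paper avoids this problem entirely: the lemma requires no face-compatibility of templates at all. It picks an arbitrary vertex $a$ (not a non-cut vertex), lets the components of $\Gamma-a$ be whatever they are, reflects the inductive templates to fill the boundary $\partial Q^{n_i+1}$ of a cross-polytope, takes the join over components, cones from the centre with the apex coloured $a$, and then maps $Q^n$ PL-homeomorphically onto $\Delta^n$. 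Gluing of templates across a facet $\tau$ of $Z$ is then automatic, because the same $(K,f,C)$ is used for every $n$-simplex and the two colour-matching affine isomorphisms $\psi_{\sigma_1}^{-1}$, $\psi_{\sigma_2}^{-1}$ agree on $\tau$. Your stronger face-compatibility is never needed.

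The second gap is that your vertex insertion can destroy regularity of the colouring. Take $n=2$ and $\Gamma$ the star $1\mathrm{-}0\mathrm{-}2$ with $v=1$: then $\Gamma'$ is the edge $0\mathrm{-}2$, $Y_{\Gamma'}$ is the unsubdivided segment $[e_0,e_2]$, the cone from $e_1$ is the unsubdivided triangle, and the only $w$ with colour not adjacent to $v$ is $e_2$. The inserted vertex $q$ on $[e_1,e_2]$ must be coloured by a neighbour of $1$, namely $0$; but $q$ is then adjacent to $e_0$ (also colour $0$) inside the triangle $\{e_0,e_1,q\}$. More generally, the ``splits a $6$-cycle into two $4$-cycles'' picture is tied to the barycentric-subdivision intuition and does not describe what a bare cone over $Y_{\Gamma'}$ looks like.

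Finally, the gluing step is not controlled by a clause about $(n-2)$-simplices ``on an $(n-2)$-face lying in exactly two $n$-simplices,'' because codimension-$2$ faces of $Z$ can be shared by arbitrarily many $n$-simplices, and barycentric subdivision of $Z$ does not reduce this to two. The paper instead proves (condition~$2$ of its lemma, together with condition~$1$ that no boundary vertex has colour $a$) that a non-edge $(n-2)$-simplex of the template is never mapped to a codimension-$\ge2$ face of $\Delta^n$: it is either interior with link a $4$-cycle, or in the relative interior of a facet with two incident $n$-simplices. That dichotomy is exactly what makes the gluing trivial, and it is the missing ingredient in your plan.
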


Notice that if $\Gamma=L_{n+1}$ is the path graph, then this proposition is obvious. Indeed, for the required subdivision one can take the barycentric subdivision of~$Z$ with the barycentre of every $i$-dimensional simplex of~$Z$ coloured in colour~$i$. The condition~$(*)$ can be checked easily. The proof of Proposition~\ref{propos_subd} in the general case will be based on the following auxiliary lemma.

\begin{lem}\label{lem_subd}
Suppose that\/ $\Gamma$ is a connected graph, $|V(\Gamma)|=n+1$, and  $a$ is a vertex of\/~$\Gamma$. Then there exists a topological subdivision~$(K,f)$ of the standard $n$-dimensional simplex~$\Delta^n$ with a regular colouring of vertices $C\colon V(K)\to V(\Gamma)$ satisfying the following two conditions:
\begin{enumerate}
\item $C(v)\ne a$ for all vertices $v$ on the boundary of the disk~$K$;
\item If  $\rho$ is an $(n-2)$-dimensional simplex of~$K$ such that the two elements of the two-element set $V(\Gamma)\setminus C(\rho)$ are not connected by an edge in~$\Gamma$,  then either $f(\Int\rho)\subset\Int\Delta^n$ and $\rho$ is contained in exactly four $n$-dimensional simplices of~$K$ or $f(\Int\rho)$ is contained in the relative interior of a facet of~$\Delta^n$  and $\rho$ is contained in exactly two $n$-dimensional simplices of~$K$.
\end{enumerate} 
\end{lem}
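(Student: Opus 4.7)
I proceed by induction on $n=|V(\Gamma)|-1$. The base cases $n=0$ and $n=1$ are immediate: for $n=0$ take a single point coloured $a$, and for $n=1$ subdivide $\Delta^1$ into two arcs with the interior vertex coloured $a$ and the two endpoints coloured by a common neighbour of $a$ in $\Gamma$; condition~2 is vacuous in both cases. For $n\ge 2$, pick a non-cut vertex $b\ne a$ of $\Gamma$---such a vertex exists because a connected graph on at least two vertices has at least two non-cut vertices, and $|V(\Gamma)|\ge 3$. Place a new vertex $v_a$ at the barycentre of $\Delta^n$, colour it $a$, and form $K=v_a*\Sigma$, the cone on a subdivision $\Sigma$ of $\partial\Delta^n$ that uses colours in $V(\Gamma)\setminus\{a\}$. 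Build $\Sigma$ facet-by-facet: on $\Delta_{V\setminus\{b\}}$, apply the inductive hypothesis to the connected graph $\Gamma-b$ with designated vertex $a$; on a facet $\Delta_{V\setminus\{j\}}$ with $j\ne b$, apply it to a connected graph $\Gamma_j$ on $V\setminus\{j\}$---taking a connected supergraph of $\Gamma|_{V\setminus\{j\}}$ if $j$ happens to be a cut vertex of $\Gamma$---with a designated vertex $a_j$ chosen so that the resulting subdivisions agree on the shared $(n-2)$-faces of $\Delta^n$.

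Condition~1 is clear from the construction: $v_a$ is the unique $a$-coloured vertex of $K$ and it lies in the interior. For condition~2 I distinguish two types of $(n-2)$-simplices $\rho$ of $K$: those lying entirely in $\Sigma$ (\emph{type A}) and those of the form $v_a*\sigma$ for an $(n-3)$-simplex $\sigma$ of $\Sigma$ (\emph{type B}). A type-A simplex $\rho$ is contained in exactly two top simplices $v_a*\tau$ of $K$, one for each $(n-1)$-simplex $\tau$ of $\Sigma$ adjacent to $\rho$; when $f(\Int\rho)$ lies in the relative interior of a single facet, this is exactly the ``two simplices'' case of condition~2. A type-B simplex $v_a*\sigma$ is contained in as many top simplices as there are $(n-1)$-simplices of $\Sigma$ containing $\sigma$, i.e.\ the length of the link of $\sigma$ in $\Sigma$; when $\sigma$ lies in the interior of a single facet, the inductive condition~2 applied to $\sigma$ as an $(n-3)$-simplex of that facet's subdivision gives link length exactly $4$, as required.

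The main obstacle is the coherence of the construction at the codimension-$2$ skeleton of $\partial\Delta^n$. First, the two facet-subdivisions restricted to a shared $(n-2)$-face must coincide, which forces a coordinated choice of the inductive parameters $\Gamma_j$ and $a_j$. Second, for an $(n-3)$-simplex $\sigma$ of $\Sigma$ lying on a codimension-$2$ face, the link of $\sigma$ in $\Sigma$ assembles contributions from two facet subdivisions, and the resulting length must equal $4$ exactly when $(V(\Gamma)\setminus\{a\})\setminus C(\sigma)$ is a non-edge of $\Gamma$. Third, no problematic $(n-2)$-simplex is allowed to land on a codimension-$\ge 2$ face of $\Delta^n$, since the lemma permits only two locations for them. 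The combinatorics is especially delicate when $\Gamma$ contains cut vertices, because passing to a connected supergraph $\Gamma_j$ must be done without turning a non-edge of $\Gamma$ into an edge of $\Gamma_j$ in a way that weakens the inductive step in the wrong direction. I anticipate this careful, simultaneous bookkeeping over facets to be the technical heart of the proof.
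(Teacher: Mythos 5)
Your construction, as written, contradicts its own stated goal and violates condition~(1). You want $\Sigma$ to be a subdivision of $\partial\Delta^n$ coloured only in $V(\Gamma)\setminus\{a\}$, but on the facet $\Delta_{V\setminus\{b\}}$ you apply the inductive hypothesis to the pair $(\Gamma-b,a)$, which by definition produces a regular colouring in the full set $V(\Gamma-b)=V(\Gamma)\setminus\{b\}$ --- a set that still contains $a$. The inductive construction places $a$-coloured vertices in the interior of that facet (indeed the lemma's condition~(1) only keeps $a$ off the \emph{boundary} of the facet subdivision). Those vertices lie in $\partial\Delta^n$, so $v_a$ is \emph{not} the unique $a$-coloured vertex of $K$, condition~(1) fails, and since $v_a$ is joined by a cone edge to every vertex of $\Sigma$, the colouring is not even regular. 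A second independent problem is the ``connected supergraph'' patch for cut vertices $j$: if $\Gamma_j$ acquires an edge $\{c_1,c_2\}$ that is a non-edge of $\Gamma$, the inductive condition~(2) for $\Gamma_j$ simply says nothing about $(n-2)$-simplices $\rho$ with $V(\Gamma)\setminus C(\rho)=\{c_1,c_2\}$, so the required four-fold (or two-fold) incidence cannot be certified for $\Gamma$. Finally, your closing paragraph concedes that the inter-facet compatibility is unresolved; that is precisely the part that cannot be fixed by local bookkeeping.

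The paper's proof removes the vertex $a$ itself (not some other non-cut vertex $b$) and works with the connected components $\Gamma_1,\dots,\Gamma_s$ of $\Gamma-a$. For each $i$ it applies induction to $(\Gamma_i,b_i)$ with $b_i$ adjacent to $a$, then reflects the resulting subdivision $K_i$ of $\Delta^{n_i}$ across coordinate hyperplanes to obtain a $G_i\cong\Z_2^{n_i+1}$-invariant subdivision $\bK_i$ of the boundary sphere of the cross-polytope $Q^{n_i+1}$. Taking the \emph{join} $J=\bK_1*\cdots*\bK_s$ yields a subdivision of $\partial Q^n$, coloured only in $V(\Gamma)\setminus\{a\}$, whose codimension-$2$ incidences are controlled automatically by the pseudo-manifold property of each $\bK_i$ and by the inductive condition~(2). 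Coning $J$ off at the centre (apex coloured $a$) and transporting $Q^n$ to $\Delta^n$ by a suitable PL homeomorphism completes the step. This avoids facet-by-facet gluing altogether: the reflection and join structure on $\partial Q^n$ is exactly the coherence mechanism your approach lacks. If you want to keep a ``cone over a subdivision of a boundary sphere'' strategy, you should mimic this: induct on $\Gamma-a$, not $\Gamma-b$, and build the boundary subdivision as a join of reflected pieces rather than facet-by-facet on $\partial\Delta^n$.
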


\begin{proof}
Let us prove the assertion of the lemma by induction on~$n$. The base of induction for $n=0$ is trivial, since the conditions~1 and~2 are void.
Assume that the assertion of the lemma is true for graphs with not more than $n$ vertices, and prove it for a graph~$\Gamma$ with  $n+1$ vertices. 

Consider the graph~$\Gamma-a$ obtained from~$\Gamma$ by removing the vertex~$a$ and all (open) edges connecting to it. Denote by $\Gamma_1,\ldots,\Gamma_s$ the connected components of $\Gamma-a$. Let $n_1+1,\ldots,n_s+1$ be the numbers of vertices of the graphs $\Gamma_1,\ldots,\Gamma_s$, respectively; then  $n=n_1+\cdots+n_s+s$. Since the graph~$\Gamma$ is connected, every connected component~$\Gamma_i$ contains at least one vertex that is connected by an edge with~$a$; we choose an arbitrary vertex satisfying this condition and denote it by~$b_i$. By the induction hypothesis, the assertion of the lemma is true for all pairs~$(\Gamma_i,b_i)$. This means that, for every~$i=1,\ldots,s$, there exists a topological subdivision~$(K_i,f_i)$ of the standard simplex~$\Delta^{n_i}$ with a regular colouring of vertices $C_i\colon V(K_i)\to V(\Gamma_i)$ that satisfies conditions~1 (with $a$ replaced by~$b_i$) and~2.

For every~$i$, consider the Euclidean space~$\R^{n_i+1}$ with the standard orthonormal basis $e_0,\ldots,e_{n_i}$ and the standard simplex~$\Delta^{n_i}\subset\R^{n_i+1}$ with vertices $e_0,\ldots,e_{n_i}$. Consider the group $G_i\cong\Z_2^{n_i+1}$ of isometries of~$\R^{n_i+1}$ generated by the orthogonal reflections in the coordinate hyperplanes $\{x_0=0\},\ldots,\{x_{n_i}=0\}$, where $x_0,\ldots,x_{n_i}$ are the coordinates in the basis $e_0,\ldots,e_{n_i}$. The simplices~$g(\Delta^{n_i})$, $g\in G_i$, constitute the boundary of the  \textit{cross-polytope}~$Q^{n_i+1}$, which is the regular convex polytope in~$\R^{n_i+1}$ with vertices $\pm e_0,\ldots,\pm e_{n_i}$. Subdivide every simplex~$g(\Delta^{n_i})$ by means of the topological subdivision~$(K_i,g\circ f_i)$. Then we obtain the topological subdivision of the boundary of~$Q^{n_i+1}$; we denote this subdivision by~$(\bK_i,\bar f_i)$. The regular colouring~$C_i$ of vertices of~$K_i$ induces the regular colouring~$\overline{C}_i$ of vertices of~$\bK_i$ in colours in the same set~$V(\Gamma_i)$.  Condition~2 for~$K_i$ provides that any  $(n_i-2)$-dimensional simplex~$\rho$ of~$\bK_i$ is contained in exactly four  $n_i$-dimensional simplices of~$\bK_i$ whenever the two elements of the two-element set~$V(\Gamma_i)\setminus \overline{C}_i(\rho)$ are not connected by an edge.

We identify naturally the boundary~$\partial Q^n$ of the cross-polytope $Q^{n}\subset\R^n$ with the join $\partial Q^{n_1+1} * \cdots *\partial Q^{n_s+1}$, and consider the topological subdivision~$(J,q)$ of~$\partial Q^n$  that is obtained by taking the join of the topological subdivisions $(\bK_1,\bar f_1),\ldots,(\bK_s,\bar f_s)$. Then vertices of~$J$ are regularly coloured in colours in the set $V(\Gamma_1)\cup\cdots\cup V(\Gamma_s)=V(\Gamma)\setminus\{a\}$.

The cross-polytope~$Q^n$ is naturally identified with the cone over~$\partial Q^n$ with apex in the origin of~$\R^n$.  We put $K=\cone(J)$, $\tilde q=\cone(q)$. Then $(K,\tilde q)$ is a topological subdivision of~$Q^n$. We denote the apex of the cone~$K=\cone(J)$ by~$p$ and colour it in the colour~$a$. Then we obtain the regular colouring~$C$ of vertices of~$K$ in colours in the set~$V(\Gamma)$.

Now, consider an arbitrary piecewise linear homeomorphism $\varphi\colon Q^n\to\Delta^n$ such that the pre-image of every face of~$\Delta^n$ is the union of several closed faces of~$Q^n$. For instance, such homeomorphism can be constructed in the following way. Denote the vertices of~$Q^n$ by $
\pm \varepsilon_1,\ldots,\pm \varepsilon_n$ and the centre of~$Q^n$ by~$o$. (We use notation~$\pm\varepsilon_i$ instead of~$\pm e_i$ to avoid confusion between vertices of the cross-polytope~$Q^n$ and vertices of the standard simplex~$\Delta^n$.) For a triangulation of the cross-polytope~$Q^n$, we take the cone with apex~$o$ over the natural triangulation of~$\partial Q^n$. Define a mapping~$\varphi$ on the vertices of this triangulation by
\begin{gather*}
\varphi(\varepsilon_i)=e_i,\qquad
\varphi(-\varepsilon_i)=\frac{1}{i}(e_0+\cdots+e_{i-1}),\qquad i=1,\ldots,n,\\
\varphi(o)=\frac{1}{n+1}(e_0+\cdots+e_n),
\end{gather*} 
and extend it linearly to every simplex of the triangulation. It is easy to check that the obtained mapping is a piecewise linear homeomorphism and satisfies the required condition that the pre-images of faces of~$\Delta^n$ are unions of faces of the cross-polytope. Then the mapping 
$
f=\varphi\circ\tilde q\colon K\to\Delta^n
$
provides a topological subdivision of the simplex~$\Delta^n$.

Obviously, condition~1 holds for the subdivision~$(K,f)$. Let us prove condition~2. Consider an arbitrary $(n-2)$-dimensional simplex~$\rho$ of~$K$. Suppose that $V(\Gamma)\setminus C(\rho)=\{c_1,c_2\}$, and $c_1$ and~$c_2$ are not connected by an edge in~$\Gamma$. Consider three cases:

\textsl{1. The vertices $c_1$ and $c_2$ lie in distinct components\/~$\Gamma_{j_1}$ and\/~$\Gamma_{j_2}$ of\/~$\Gamma-a$.} Then $a\in C(\rho)$, hence,  $p\in \rho$. Therefore, $f(\Int\rho)\subset\Int\Delta^n$. Denote by~$\tau$ the facet of~$\rho$ opposite to the vertex~$p$.  We have, $\tau=\tau_1*\cdots*\tau_s$, where $\tau_1,\ldots,\tau_s$ are simplices of the complexes $\bK_1,\ldots,\bK_s$, respectively, such that  $\dim\tau_i=n_i$ whenever $i\ne j_1,j_2$,  $\dim\tau_{j_1}=n_{j_1}-1$, and $\dim\tau_{j_2}=n_{j_2}-1$. Since every complex~$\bK_i$ is an $n_i$-dimensional simplicial cell sphere, hence, an $n_i$-dimensional pseudo-manifold, it follows that~$\tau$ is contained in exactly four  $(n-1)$-dimensional simplices of the complex $J=\bK_1*\cdots*\bK_s$.  
Therefore, the simplex~$\rho=p*\tau$ is contained in exactly four $n$-dimensional simplices of the complex~$K=\cone(J)$.

\textsl{2. The vertices $c_1$ and $c_2$ lie in the same component~$\Gamma_j$ of\/~$\Gamma-a$.} As in the previous case, we have  $p\in \rho$, hence, $f(\Int\rho)\subset\Int\Delta^n$. The facet $\tau\subset\rho$ opposite to~$p$ has the decomposition $\tau=\tau_1*\cdots*\tau_s$, where $\tau_1,\ldots,\tau_s$ are simplices of the complexes $\bK_1,\ldots,\bK_s$, respectively, such that $\dim\tau_i=n_i$ unless $i= j$, and $\dim\tau_{j}=n_{j}-2$. As we have already mentioned, condition~2 for~$K_j$ implies that $\tau_j$ is contained in exactly four $n_j$-dimensional simplices of~$\bK_j$. 
Hence, the simplex~$\rho=p*\tau_1*\cdots*\tau_s$ is contained in exactly four $n$-dimensional simplices of the complex $K=p*\bK_1*\cdots*\bK_s$.

\textsl{3. One of the vertices~$c_1$ and~$c_2$  \textnormal{(}say,~$c_1$\textnormal{)} coincides with~$a$.} Then $a\notin C(\rho)$, hence, $\rho$ is contained in $\partial K=J$. Since~$J$ is homeomorphic to the $(n-1)$-dimensional sphere and, in particular, is an $(n-1)$-dimensional  pseudo-manifold, we obtain that $\rho$ is contained in exactly two $(n-1)$-dimensional simplices of~$J$. Therefore, $\rho$ is contained in exactly two $n$-dimensional simplices of~$K=\cone(J)$. Let us prove that the set $f(\Int\rho)$ is contained in the relative interior of a facet of~$\Delta^n$. Since we already know that $f(\rho)\subset\partial\Delta^n$ we suffice to prove that  $f(\rho)$ is contained in no $(n-2)$-dimensional face of~$\Delta^n$. Further, to prove this we suffice to prove that the set $\tilde q(\rho)$ is contained in no $(n-2)$-dimensional face of~$Q^n$. Suppose that $c_2\in V(\Gamma_j)$. Then  $\rho=\rho_1*\cdots*\rho_s$, where $\rho_1,\ldots,\rho_s$ are simplices of the complexes $\bK_1,\ldots,\bK_s$, respectively, such that $\dim\rho_i=n_i$ unless $i= j$, and $\dim\rho_j=n_j-1$. Then  $q(\rho)=\bar f_1(\rho_1)*\cdots*\bar f_s(\rho_s)$. Since the subdivision~$(J,q)$ of~$\partial Q^n$ is invariant under the action of the group $G=G_1\times\cdots\times G_s$, we may assume without loss of generality that the simplices $\rho_1,\ldots,\rho_s$ are contained in the subcomplexes $K_1,\ldots,K_s$ of the complexes $\bK_1,\ldots,\bK_s$, respectively.  Then, for every $i\ne j$, the set~$f_i(\rho_i)$ contains a point in the relative interior of~$\Delta^{n_i}$. Since the vertices $c_1=a$ and~$c_2$ are not connected by an edge, we have $c_2\ne b_j$. Hence, $b_j\in C_j(\rho_j)$. Therefore, condition~1 for the topological subdivision~$(K_j,f_j)$ implies that the simplex~$\rho_j$ contains a vertex that does not lie on the boundary of the disk~$K_j$. Consequently, the set~$f_j(\rho_j)$ also contains a point in the relative interior of~$\Delta^{n_j}$. Thus, the set~$q(\rho)$ contains a point in the relative interior of the facet~$\Delta^{n-1}=\Delta^{n_1}*\cdots *\Delta^{n_s}$  of~$Q^n$.
\end{proof}

\begin{proof}[of Proposition~\ref{propos_subd}] Replacing, if necessary, the pseudo-manifold~$Z$ with its barycntric subdivision, we may assume that the vertices of~$Z$ admit a regular colouring in colours in the set~$\{0,\ldots,n\}$. For each  $n$-dimensional simplex~$\sigma$ of~$Z$, consider the affine isomorphism $\psi_{\sigma}\colon\Delta^n\to\sigma$ taking the vertices $e_0,\ldots,e_n$ of~$\Delta^n$ to the vertices of~$\sigma$ of colours~$0,\ldots,n$, respectively. Let  $f\colon K\to \Delta^n$ be the topological subdivision in Lemma~\ref{lem_subd} for the graph~$\Gamma$ and a vertex~$a$ of it. Subdivide every $n$-dimensional simplex~$\sigma$ of~$Z$ by means of the subdivision~$(K,\psi_{\sigma}\circ f)$. Denote the obtained subdivision of~$Z$ by~$(Y,h)$. Condition~$(*)$ for the subdivision~$(Y,h)$ follows immediately from condition~2 in Lemma~\ref{lem_subd} for the subdivision~$(K,f)$.
\end{proof}

\section{A combinatorial construction}\label{section_constr}

In this section we describe an auxiliary combinatorial construction that will be used in the next section in the proof of Theorem~\ref{theorem_main}. This construction generalizes the construction suggested by the author in~\cite{Gai07} and developed in~\cite{Gai08a}, \cite{Gai08b}, \cite{Gai08c}, and~\cite{Gai13}.

Let $\Gamma$ be a connected graph on the vertex set $\{0,\ldots,n\}$. Put $m=|\CB'(\Gamma)|$. Then $m$ is the number of facets of the graph-associahedron~$P_{\Gamma}$. As above, we conveniently index elements of the standard basis of~$\Z_2^m$ by subsets~$S\in\CB'(\Gamma)$, and denote the basis element corresponding to a subset~$S$ by~$a_S$.

Let $\Sigma=\Sigma_+\sqcup \Sigma_-$ be a finite set with involutions $\xi_0,\ldots,\xi_n$ such that every~$\xi_i$ exchanges the subsets~$\Sigma_+$ and~$\Sigma_-$. Assume that the involutions~$\xi_i$ and~$\xi_j$ commute for any vertices~$i$ and~$j$ that are not connected by an edge in~$\Gamma$. 

Suppose that $S\in\CB'(\Gamma)$.  Denote by~$\min(S)$ the least element of the set~$S$. Let~$\CI_{S}$ be the set consisting of all involutions $\mu\colon \Sigma\to \Sigma$ such that
$$
\mu=\xi_{i_1}\circ\cdots\circ\xi_{i_q}\circ\xi_{\min(S)}\circ\xi_{i_q}\circ\cdots\circ\xi_{i_1}
$$
for some elements $i_1,\ldots,i_q\in S$, which are not required to be distinct. Since every involution~$\xi_i$ exchanges the subsets~$\Sigma_+$ and~$\Sigma_-$, the same holds true for any involution~$\mu\in \CI_{S}$.

It is easy to see that if $S_1,S_2\in\CB'(\Gamma)$ and $S_1\subseteq S_2$, then the involution $\mu_1\circ\mu_2\circ\mu_1$ belongs to~$\CI_{S_2}$ for any~$\mu_1\in \CI_{S_1}$ and~$\mu_2\in\CI_{S_2}$. 

Now, suppose that $S_1,S_2\in\CB'(\Gamma)$ and $S_1\cup S_2\notin\CB(\Gamma)$. Then $S_1\cap S_2=\varnothing$ and no element of~$S_1$ is connected by an edge in~$\Gamma$ with any element of~$S_2$. Hence the involutions~$\xi_{i_1}$ and~$\xi_{i_2}$ commute for any $i_1\in S_1$ and~$i_2\in S_2$. Therefore, any involutions~$\mu_1\in\CI_{S_1}$ and~$\mu_2\in\CI_{S_2}$ commute.

We put,
\begin{equation}\label{eq_Omega}
\Omega_{\pm}=\Sigma_{\pm}\times\left(\prod_{T\in\CB'(\Gamma)}\CI_{T}\right)\times\Z_2^{m},\qquad \Omega=\Omega_+\sqcup\Omega_-\,.
\end{equation}
An arbitrary element of~$\Omega$ has the form $\bigl(\sigma,(\mu_{T})_{T\in\CB'(\Gamma)},g\bigr)$, where $\sigma\in \Sigma$, $\mu_{T}\in\CI_{T}$ for all~$T\in\CB'(\Gamma)$, and $g\in \Z_2^{m}$.

We define the mappings $\varphi_{S}\colon \Omega\to \Omega$, $S\in\CB'(\Gamma)$, by
\begin{gather}\label{eq_phi}
\varphi_{S}\bigl(\sigma,(\mu_{T})_{T\in\CB'(\Gamma)},g\bigr)=
\bigl(\mu_{S}(\sigma),(\widetilde\mu_{T})_{T\in\CB'(\Gamma)},g+a_{S}\bigr),\\
\widetilde\mu_{T}=\left\{
\begin{aligned}
&\mu_{S}\circ\mu_{T}\circ\mu_{S}&&\text{if $S\subseteq T$,}\\
&\mu_{T}&&\text{if $S\not\subseteq T$.}
\end{aligned}
\right.\nonumber
\end{gather}
It has already been mentioned above that if $S\subseteq T$ then the involution $\mu_{S}\circ\mu_{T}\circ\mu_{S}$ belongs to~$\CI_{T}$. Hence, $\widetilde\mu_{T}\in\CI_{T}$ for all~$T\in\CB'(\Gamma)$. Since $\widetilde\mu_S=\mu_S$, we obtain that $\varphi_S^2=\mathrm{id}_{\Omega}$, that is, $\varphi_S$ is an involution. The involution~$\mu_S$ exchanges the subsets~$\Sigma_+$ and~$\Sigma_-$. Therefore, the involution~$\varphi_{S}$ exchanges the subsets~$\Omega_+$ and~$\Omega_-$.

\begin{propos}\label{propos_phi_commute}
The involutions~$\varphi_{S_1}$ and~$\varphi_{S_2}$ commute for any $S_1,S_2\in \CB'(\Gamma)$ such that the facets~$F_{S_1}$ and~$F_{S_2}$ of~$P_{\Gamma}$ intersect each other.
\end{propos}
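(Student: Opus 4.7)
The plan is to verify $\varphi_{S_1}\circ\varphi_{S_2}=\varphi_{S_2}\circ\varphi_{S_1}$ by a direct componentwise computation. Recall from the combinatorial description of~$P_\Gamma$ that the facets $F_{S_1}$ and $F_{S_2}$ intersect precisely when $S_1\subseteq S_2$, or $S_2\subseteq S_1$, or $S_1\cap S_2=\varnothing$ together with $S_1\cup S_2\notin\CB(\Gamma)$. I will split into these cases. The $\Z_2^m$-coordinate in~\eqref{eq_Omega} trivially agrees for both orderings, yielding $g+a_{S_1}+a_{S_2}$, so the work lies in the first coordinate and in the families~$(\mu_T)_T$.

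In the case $S_1\subsetneq S_2$ (the subcase $S_1=S_2$ being trivial and the reverse inclusion symmetric), $\varphi_{S_2}$ leaves $\mu_{S_1}$ unchanged since $S_2\not\subseteq S_1$, while $\varphi_{S_1}$ sends $\mu_{S_2}$ to $\mu_{S_1}\mu_{S_2}\mu_{S_1}$, which belongs to~$\CI_{S_2}$ by the observation made just after the definition of~$\CI_{S}$. A direct substitution into~\eqref{eq_phi} then shows that both compositions act on~$\sigma$ as $\mu_{S_1}\mu_{S_2}$. For an arbitrary $T\in\CB'(\Gamma)$ the update of~$\mu_T$ is settled by examining the feasible subcases of the pair of conditions $S_1\subseteq T$ and $S_2\subseteq T$ (the chain $S_1\subsetneq S_2\subseteq T$ forces $S_1\subseteq T$, so only three subcases occur), and in each subcase the two iterated conjugations of~$\mu_T$ visibly coincide.

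In the disjoint case, the hypothesis $S_1\cup S_2\notin\CB(\Gamma)$ guarantees that no vertex of~$S_1$ is adjacent in~$\Gamma$ to any vertex of~$S_2$, which, as noted in the paragraph preceding~\eqref{eq_Omega}, implies that every $\mu_1\in\CI_{S_1}$ and $\mu_2\in\CI_{S_2}$ commute. Neither involution updates the other's factor (since neither of~$S_1,S_2$ is contained in the other), so both compositions act on the first coordinate as $\mu_{S_1}\mu_{S_2}=\mu_{S_2}\mu_{S_1}$. The only non-trivial subcase for the update of~$\mu_T$ is $S_1\cup S_2\subseteq T$ (which can genuinely occur, since $T$ may lie in $\CB'(\Gamma)$ even though $S_1\cup S_2$ does not), where the two compositions produce the double conjugates $\mu_{S_1}\mu_{S_2}\mu_T\mu_{S_2}\mu_{S_1}$ and $\mu_{S_2}\mu_{S_1}\mu_T\mu_{S_1}\mu_{S_2}$; these coincide by the commutativity just mentioned.

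The only real obstacle is the bookkeeping of subcases; the essential mathematical input is the single observation that $\mu_{S_1}$ and $\mu_{S_2}$ commute whenever $S_1\cup S_2\notin\CB(\Gamma)$, which is precisely the reason this condition enters the combinatorial criterion for intersecting facets.
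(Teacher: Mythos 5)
Your proof is correct and takes essentially the same route as the paper's: split into the nested case $S_1\subseteq S_2$ and the disjoint case $S_1\cup S_2\notin\CB(\Gamma)$, carry out the coordinatewise computation using~\eqref{eq_phi} in the first, and invoke the commutativity of involutions in~$\CI_{S_1}$ with those in~$\CI_{S_2}$ in the second. You spell out the subcase analysis in the disjoint case more explicitly than the paper (which dismisses it with ``immediately implies''), but the mathematical content is the same.
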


\begin{proof}
The facets~$F_{S_1}$ and~$F_{S_2}$ intersect each other if and only if either one of the two subsets~$S_1$ and~$S_2$ is contained in the other or $S_1\cup S_2\notin\CB(\Gamma)$. In the first case, we may assume that $S_1\subseteq S_2$. A direct computation using~\eqref{eq_phi} yields
\begin{gather*}
\begin{split}
(\varphi_{S_1}\circ\varphi_{S_2})\bigl(\sigma,(\mu_{T})_{T\in\CB'(\Gamma)},g\bigr)=(\varphi_{S_2}\circ\varphi_{S_1})\bigl(\sigma,(\mu_{T})_{T\in\CB'(\Gamma)},g\bigr)={} \\
{}=\bigl((\mu_{S_1}\circ\mu_{S_2})(\sigma),(\widehat\mu_{T})_{T\in\CB'(\Gamma)},g+a_{S_1}+a_{S_2}\bigr),
\end{split}\\
\widehat\mu_{T}=\left\{
\begin{aligned}
&\mu_{S_1}\circ\mu_{S_2}\circ\mu_{T}\circ\mu_{S_2}\circ\mu_{S_1}&&\text{if $S_2\subseteq T$,}\\
&\mu_{S_1}\circ\mu_{T}\circ\mu_{S_1}&&\text{if $S_1\subseteq T$ and $S_2\not\subseteq T$,}\\
&\mu_{T},&&\text{if $S_1\not\subseteq T$.}
\end{aligned}
\right.\nonumber
\end{gather*}
In the second case, we have $S_1\cup S_2\notin\CB(\Gamma)$. Hence any involution in~$\CI_{S_1}$ commutes with any involution in~$\CI_{S_2}$, which immediately implies that $\varphi_{S_1}\circ\varphi_{S_2}=\varphi_{S_2}\circ\varphi_{S_1}$.
\end{proof}

\section{Proof of Theorem~\ref{theorem_main}}\label{section_proof}

It follows easily from the definition of singular homology that any homology class~$z\in H_n(X;\Z)$ of any topological space~$X$ can be realized as a continuous image of the fundamental homology class of an oriented $n$-dimensional simplicial cell pseudo-manifold~$Z$, i.\,e., there exists a continuous mapping $\alpha\colon Z\to X$ such that $\alpha_*[Z]=z$. By Proposition~\ref{propos_subd}, there exists a topological subdivision~$(Y,h)$ of~$Z$ with a regular colouring of vertices $C\colon V(Y)\to\{0,\ldots,n\}$ satisfying condition~$(*)$. Then $(\alpha\circ h)_*[Y]=z$. 

We denote by~$\Sigma$ the set of $n$-dimensional simplices of~$Y$. For each simplex~$\sigma\in\Sigma$, consider the affine isomorphism $\psi_{\sigma}\colon\Delta^n\to\sigma$ taking the vertices $e_0,\ldots,e_n$ of the standard simplex~$\Delta^n$ to the vertices of~$\sigma$ of colours~$0,\ldots,n$, respectively. We denote by~$\Sigma_+$ the set of all simplices $\sigma\in\Sigma$ such that the isomorphism~$\psi_{\sigma}$ preserves the orientation, and we denote by~$\Sigma_-$ the set of all simplices~$\sigma\in\Sigma$ such that the isomorphism~$\psi_{\sigma}$ reverses the orientation.
Obviously, if two different simplices $\sigma_1,\sigma_2\in\Sigma$ have a common $(n-1)$-dimensional face, then one of these two simplices belongs to~$\Sigma_+$, and the other belongs to~$\Sigma_-$.

For each $i\in\{0,\ldots,n\}$ and each~$\sigma\in\Sigma$, we denote by~$\xi_i(\sigma)$ a unique simplex in~$\Sigma$ such that $\xi_i(\sigma)\ne\sigma$ and the simplices~$\sigma$ and~$\xi_i(\sigma)$ have a common facet~$\tau$ whose set of colours of vertices is
$$C(\tau)=\{0,\ldots,n\}\setminus\{i\}.$$
(Since $Y$ is not necessarily a simplicial complex but only a simplicial cell complex, it is possible that $\tau$ is not a unique common facet of the simplices~$\sigma$ and~$\xi_i(\sigma)$.)

Then $\xi_i\colon\Sigma\to\Sigma$ are involutions satisfying $\xi_i(\Sigma_+)=\Sigma_-$ and $\xi_i(\Sigma_-)=\Sigma_+$. By condition~$(*)$ in Proposition~\ref{propos_subd}, each $(n-2)$-dimensional simplex~$\rho$ of~$Y$ such that the two elements of the two-element set $\{i,j\}=\{0,\ldots,n\}\setminus C(\rho)$ are not connected by an edge in~$\Gamma$ is contained in exactly four  $n$-dimensional simplices. It is easy to see that if we denote by~$\sigma$ one of these four simplices, then the three others will be $\xi_i(\sigma)$, $\xi_j(\sigma)$, and $\xi_i(\xi_j(\sigma))=\xi_j(\xi_i(\sigma))$. Thus, the involutions~$\xi_i$ and~$\xi_j$ commute whenever $i$ and~$j$ are not connected by an edge in~$\Gamma$.

We apply the construction in the previous section to the set~$\Sigma$ with the involutions~$\xi_0,\ldots,\xi_n$. This means that, we introduce the set $\Omega=\Omega_+\sqcup\Omega_-$ and the involutions $\varphi_S\colon\Omega\to\Omega$, $S\in\CB'(\Gamma)$ by~\eqref{eq_Omega},~\eqref{eq_phi}. 

We denote by~$\Phi$ the subgroup of the symmetric group on~$\Omega$ generated by the involutions~$\varphi_S$, $S\in\CB'(\Gamma)$. The action of the involutions~$\varphi_S$ on the factor~$\Z_2^m$ in decomposition~\eqref{eq_Omega} yields a well-defined epimorphism $\varkappa\colon\Phi\to\Z_2^m$ such that $\varkappa(\varphi_S)=a_S$ for all~$S$.

For each point $x\in P_{\Gamma}$, we denote by~$\Phi(x)$ the subgroup of~$\Phi$ generated by all~$\varphi_S$ such that~$x\in F_S$. Suppose that~$x$ lies in the relative interior of an $(n-k)$-dimensional face $F_{S_1}\cap\cdots\cap F_{S_k}$ of~$P_{\Gamma}$. By Proposition~\ref{propos_phi_commute}, the involutions $\varphi_{S_1},\ldots,\varphi_{S_k}$ pairwise commute. Hence the restriction of~$\varkappa$ to~$\Phi(x)$ is the isomorphism onto  the subgroup $\Z_2^k(x)\subset\Z_2^m$ generated by the elements~$a_{S_1},\ldots,a_{S_k}$.

We put,
\begin{equation*}
N=(P_{\Gamma}\times\Omega)/\sim\,,
\end{equation*}
where $\sim$ is the equivalence relation  on $P_{\Gamma}\times\Omega$ such that $(x,\omega)\sim (x',\omega')$ if and only if $x=x'$ and $\theta(\omega)=\omega'$ for some~$\theta\in\Phi(x)$. We denote by~$[x,\omega]$ the point in~$N$ corresponding to the equivalence class of the pair~$(x,\omega)$. Consider the mapping $p\colon N\to\CR_{P_{\Gamma}}$ given by
\begin{equation*}
p([x,\omega])=[x,\pr_3(\omega)],
\end{equation*}
where $\pr_3$ is the projection onto the third factor in decomposition~\eqref{eq_Omega}.

\begin{propos}
The mapping $p$ is an $r$-fold covering, where $$r=|\Sigma|\prod_{T\in\CB'(\Gamma)}|\CI_T|.$$
\end{propos}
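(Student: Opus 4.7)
The proof naturally splits into three steps: well-definedness of $p$, computation of the cardinality of a fiber, and verification that $p$ is a local homeomorphism with evenly covered neighbourhoods.

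First I would verify that $p$ is well-defined. Suppose $[x,\omega]=[x,\omega']$ in~$N$, i.e., $\omega'=\theta(\omega)$ for some $\theta\in\Phi(x)$. Reading off formula~\eqref{eq_phi}, each generator $\varphi_S$ translates the third coordinate by~$a_S$, hence $\pr_3(\omega')-\pr_3(\omega)=\varkappa(\theta)$ lies in $\Z_2^k(x)$, the subgroup of $\Z_2^m$ generated by those $a_S$ with $x\in F_S$. This is precisely the equivalence defining~$\CR_{P_{\Gamma}}$, so $[x,\pr_3(\omega)]=[x,\pr_3(\omega')]$ in $\CR_{P_{\Gamma}}$.

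The key point for counting a fiber is that $\Phi(x)$ acts freely on~$\Omega$. Indeed, for $x$ in the relative interior of $F_{S_1}\cap\cdots\cap F_{S_k}$, the subgroup $\Phi(x)$ is generated by the pairwise commuting involutions $\varphi_{S_1},\ldots,\varphi_{S_k}$ (Proposition~\ref{propos_phi_commute}), and the discussion preceding the proposition establishes that $\varkappa|_{\Phi(x)}$ is an isomorphism onto~$\Z_2^k(x)$. Hence $|\Phi(x)|=2^k$, and any non-identity $\theta\in\Phi(x)$ shifts the third coordinate by a non-zero element and consequently has no fixed point in~$\Omega$. Now fix $[x,g]\in\CR_{P_{\Gamma}}$; points of $p^{-1}([x,g])$ correspond bijectively to the $\Phi(x)$-orbits on $E:=\{\omega\in\Omega:\pr_3(\omega)\in g+\Z_2^k(x)\}$. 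Since $|E|=|\Sigma|\cdot\prod_T|\CI_T|\cdot 2^k$ and each orbit has size $2^k$, the number of orbits is exactly $r=|\Sigma|\prod_T|\CI_T|$.

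The remaining, and most delicate, step is to exhibit evenly covered neighbourhoods. Choose an open neighbourhood $U$ of $x$ in $P_{\Gamma}$ meeting precisely the facets $F_{S_1},\ldots,F_{S_k}$; then $\Phi(y)\subseteq\Phi(x)$ for every $y\in U$. A neighbourhood of $[x,g]$ in $\CR_{P_{\Gamma}}$ can be modelled as the quotient of $U\times(g+\Z_2^k(x))$ by the relation that glues adjacent copies along $U\cap F_{S_i}$ via translation by~$a_{S_i}$, and a neighbourhood of $[x,\omega]$ in~$N$ is analogously modelled as the quotient of $U\times(\Phi(x)\omega)$ with gluings along $U\cap F_{S_i}$ via~$\varphi_{S_i}$. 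The isomorphism $\varkappa|_{\Phi(x)}$ converts one gluing pattern into the other, so $p$ restricts to a homeomorphism between these neighbourhoods. Freeness of the $\Phi(x)$-action ensures that the $r$ such neighbourhoods, indexed by the $r$ orbits of $\Phi(x)$ on $E$, are pairwise disjoint in the preimage of any sufficiently small neighbourhood of $[x,g]$. Together these observations show that $p$ is an $r$-fold covering.
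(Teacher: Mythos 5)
Your argument is correct and reaches the same conclusion, but it is packaged differently from the paper's proof in two respects. First, the paper proves only that $p$ is a local homeomorphism (by constructing an explicit local inverse $q([y,g+h])=[y,\nu(h)(\omega)]$ via the isomorphism $\nu=(\varkappa|_{\Phi(x)})^{-1}$) and then invokes compactness of $N$ and connectedness of $\CR_{P_{\Gamma}}$ to upgrade a local homeomorphism to a covering; you instead verify the covering property directly by exhibiting evenly covered neighbourhoods. The underlying mechanism is the same in both cases — the isomorphism $\varkappa|_{\Phi(x)}\colon\Phi(x)\to\Z_2^k(x)$ identifies the local gluing patterns — so this is a presentational rather than substantive difference, and your version has the minor merit of not relying on the compactness-implies-covering lemma. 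Second, and more genuinely different, is the sheet count: the paper counts the number of polytopal cells ($2^m r$ copies of $P_{\Gamma}$ in $N$ versus $2^m$ in $\CR_{P_{\Gamma}}$, with $p$ mapping cells bijectively onto cells), whereas you count fibers pointwise by observing that $\Phi(x)$ acts freely on $\Omega$ (because $\varkappa|_{\Phi(x)}$ is injective, so every nontrivial element translates the third coordinate nontrivially) and then counting $\Phi(x)$-orbits in the relevant subset $E\subset\Omega$. Your orbit-count is slightly more robust in that it works fiber-by-fiber without appealing to the global cell structure, though it does quietly assume that the neighbourhoods you build exhaust the preimage of a small neighbourhood of $[x,g]$; that detail is easy but you should state it. One further minor remark: the well-definedness of $p$ — your first step — is treated as immediate in the paper, and indeed it follows at once from the identity $\varkappa(\varphi_S)=a_S$, so flagging it is fine but not strictly necessary.
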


\begin{proof}
Since the spaces~$N$ and~$\CR_{P_{\Gamma}}$ are compact and~$\CR_{P_{\Gamma}}$ is connected, to prove that  $p$ is a covering, it is sufficient to prove that $p$ is a local homeomorphism, that is, to prove that for each point~$[x,\omega]\in N$ the restriction of $p$ to a neighborhood of~$[x,\omega]$ is a homeomorphism onto a neighborhood of~$p([x,\omega])$. Suppose that the point~$x$ lies in the relative interior of an $(n-k)$-dimensional face $F_{S_1}\cap\cdots\cap F_{S_k}$ of~$P_{\Gamma}$, and $\omega=(\sigma,(\mu_{T})_{T\in\CB'(\Gamma)},g)$. Then $p([x,\omega])=[x,g]$. Every point in a neighborhood of~$[x,g]$ has the form $[y,g+h]$, where $y$ is close to~$x$ and $h\in\Z_2^k(x)$. 
A  locally inverse to~$p$ mapping~$q$ is given by $q([y,g+h])=[y,\nu(h)(\omega)]$, where $\nu\colon\Z_2^k(x)\to\Phi(x)$ is the isomorphism inverse to~$\varkappa|_{\Phi(x)}$. It can be immediately checked that the mapping~$q$ is well defined and continuous in a neighborhood of~$[x,g]$, and is both the left and the right inverse to~$p$. Therefore, $p$ is a local homeomorphism, hence, a covering.

The manifold~$\CR_{P_{\Gamma}}$ is glued out of $2^{m}$ copies of~$P_{\Gamma}$, and the manifold~$N$ is glued out of~$2^{m}r$ copies of~$P_{\Gamma}$. The mapping~$p$ maps every copy of~$P_{\Gamma}$ in the decomposition of~$N$ isomorphically onto a copy of~$P_{\Gamma}$ in the decomposition of~$\CR_P$. Hence, the number of sheets of the covering~$p$ is equal to~$r$.
\end{proof}

For an $\omega=(\sigma,(\mu_T)_{T\in\CB'(\Gamma)},g)$, we put~$\varepsilon'(\omega)=1$ if $\sigma\in\Sigma_+$ and $\varepsilon'(\omega)=-1$ if $\sigma\in\Sigma_-$. Consider the homomorphism $\eta\colon\Z_2^{m}\to\Z_2$ that takes every~$a_S$ to the generator~$1$ of the group~$\Z_2=\Z/2\Z$, and put $\varepsilon''(\omega)=(-1)^{\eta(g)}$. Further, we put $\varepsilon(\omega)=\varepsilon'(\omega)\varepsilon''(\omega)$. It follows from~\eqref{eq_phi} that $\varepsilon(\varphi_S(\omega))=\varepsilon(\omega)$ for all~$S$ and~$\omega$. Hence, $\varepsilon(\omega_1)=\varepsilon(\omega_2)$ whenever $[x,\omega_1]=[x,\omega_2]$. Therefore, the manifold~$N$ is the disjoint union of the manifold~$N_+$ consisting of all points~$[x,\omega]$ such that $\varepsilon(\omega)=1$ and the manifold~$N_-$ consisting of all points~$[x,\omega]$ such that $\varepsilon(\omega)=-1$. Either of the manifolds~$N_+$ and~$N_-$ is an $(r/2)$-fold covering of~$\CR_{P_{\Gamma}}$.

Consider the mapping $\gamma\colon N_+\to Y$ given by
\begin{equation}\label{eq_gamma}
\gamma([x,(\sigma,(\mu_T)_{T\in\CB'(\Gamma)},g)])=\psi_{\sigma}(\pi_{\Gamma}(x)),
\end{equation}
where $\pi_{\Gamma}\colon P_{\Gamma}\to\Delta^n$ is the mapping constructed in Section~\ref{subsection_as}.

\begin{propos}
The mapping $\gamma$ is well defined, continuous, and 
\begin{equation*}
\gamma_*[N_+]=s[Y],\qquad s=2^{m-1}\prod_{T\in\CB'(\Gamma)}|\CI_T|.
\end{equation*}
\end{propos}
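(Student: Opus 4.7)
The plan is to check well-definedness of $\gamma$ (which is the main step), note that continuity is then immediate from the formula, and compute $\gamma_*[N_+]$ by identifying, with signs, the local degree of $\gamma$ on each of the $|\Omega_+|$ copies of $P_\Gamma$ that compose $N_+$. For well-definedness, we must show that $\gamma$ is invariant under every generator $\varphi_S$ of the stabilizer $\Phi(x)$, i.\,e., that $\psi_{\mu_S(\sigma)}(\pi_\Gamma(x))=\psi_\sigma(\pi_\Gamma(x))$ whenever $x\in F_S$. By Proposition~\ref{propos_pi}(1), $\pi_\Gamma(x)\in\Delta_{V\setminus S}\subseteq\Delta_{V\setminus\{i\}}$ for every $i\in S$. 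For such an $i$, the simplices $\sigma$ and $\xi_i(\sigma)$ share a facet whose vertex colours form exactly the set $\{0,\ldots,n\}\setminus\{i\}$, hence the affine maps $\psi_\sigma$ and $\psi_{\xi_i(\sigma)}$ agree on the whole face $\Delta_{V\setminus\{i\}}$, and in particular coincide at $\pi_\Gamma(x)$. Since $\mu_S$ is a word in involutions $\xi_i$ with $i\in S$, iterating this identity yields the required equality.

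Continuity on $N_+$ follows immediately from the continuity of~\eqref{eq_gamma} on $P_\Gamma\times\Omega_+$, together with the well-definedness just shown. For the degree, the open interiors of the copies of $P_\Gamma$ composing $N_+$ are indexed bijectively by $\omega=(\sigma,(\mu_T),g)\in\Omega_+$, and on the copy indexed by $\omega$ the map $\gamma$ equals $\psi_\sigma\circ\pi_\Gamma$. The standard orientation convention making $\CR_{P_\Gamma}$ an oriented manifold assigns to the copy of $P_\Gamma$ labelled by $g$ the sign $(-1)^{\eta(g)}=\varepsilon''(\omega)$ (this alternating sign is exactly what compensates the orientation-reversal intrinsic to gluing two half-spaces across a common boundary facet); this lifts through the covering $p$ to orient the corresponding copy in $N_+$ with the same sign. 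Combined with $\deg\pi_\Gamma=1$ (Proposition~\ref{propos_pi}(3)) and the orientation sign $\varepsilon'(\omega)$ of $\psi_\sigma$, the map $\gamma$ has local degree $\varepsilon'(\omega)\varepsilon''(\omega)=\varepsilon(\omega)=+1$ onto $\sigma$. For each $\sigma\in\Sigma$ the number of $\omega\in\Omega_+$ with first coordinate $\sigma$ is $\bigl(\prod_{T\in\CB'(\Gamma)}|\CI_T|\bigr)\cdot 2^{m-1}=s$, since the middle factor is free and the condition $\varepsilon(\omega)=1$ fixes the parity of $\eta(g)$. Summing the local contributions yields $\gamma_*[N_+]=s\sum_{\sigma\in\Sigma}[\sigma]=s[Y]$.

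The principal obstacle is the well-definedness verification, which requires exploiting both the specific form of the involutions $\mu_S$ (all their conjugating letters lie in $S$, including the central letter $\xi_{\min(S)}$) and the collapsing property $\pi_\Gamma(F_S)\subseteq\Delta_{V\setminus S}$; neither ingredient alone would suffice. The remaining computations amount to careful sign-bookkeeping against a fixed orientation convention on $\CR_{P_\Gamma}$; one must adhere to this convention in order to obtain the factor $s=2^{m-1}\prod_{T}|\CI_T|$ rather than the naive guess $2^m\prod_{T}|\CI_T|$ that one would get by ignoring the constraint $\varepsilon(\omega)=1$ defining $\Omega_+$.
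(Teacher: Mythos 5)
Your proof is correct and follows essentially the same approach as the paper: the well-definedness check reduces to the collapsing property $\pi_\Gamma(F_S)\subseteq\Delta_{V\setminus S}$ applied letter-by-letter along the word for $\mu_S$ (using that for $i\in S$ the maps $\psi_\tau$ and $\psi_{\xi_i(\tau)}$ agree on $\Delta_{V\setminus\{i\}}$), and the degree count proceeds by fixing the orientation convention on $\CR_{P_\Gamma}$ so that the local degree of $\gamma$ on each cell of $N_+$ is $\varepsilon'(\omega)\varepsilon''(\omega)=+1$, then counting $2^{m-1}\prod_T|\CI_T|$ cells over each simplex of $Y$. One small notational slip: you write that the cells of $N_+$ are indexed by $\omega\in\Omega_+$, but the paper reserves $\Omega_+$ for $\Sigma_+\times\prod_T\CI_T\times\Z_2^m$, which is not preserved by the $\varphi_S$ and is a different set from $\{\omega\in\Omega : \varepsilon(\omega)=1\}$ that actually indexes the cells of $N_+$; your subsequent count (``$\varepsilon(\omega)=1$ fixes the parity of $\eta(g)$'') shows you intend the latter, but a distinct symbol would avoid conflict with the paper's~\eqref{eq_Omega}.
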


\begin{proof}
To prove that~$\gamma$ is well defined and continuous, we need to show that the values~$\gamma([x,\omega])$ and~$\gamma([x,\omega'])$ computed by~\eqref{eq_gamma} are equal to each other whenever $(x,\omega)\sim (x,\omega')$. To this end, it is sufficient to show that the values~$\gamma([x,\omega])$ and~$\gamma([x,\varphi_S(\omega)])$ are  equal to each other whenever  $x\in F_S$. Suppose that $\omega=(\sigma,(\mu_T)_{T\in\CB'(\Gamma)},g)$; then $\varphi_S(\omega)=(\mu_S(\sigma),(\widetilde{\mu}_T)_{T\in\CB'(\Gamma)},g+a_S)$. It follows from assertion~1 of Proposition~\ref{propos_pi} that $\pi_{\Gamma}(x)\in\Delta_{V\setminus S}$ whenever  $x\in F_S$. Hence, $\psi_{\xi_i(\tau)}(\pi_{\Gamma}(x))=\psi_{\tau}(\pi_{\Gamma}(x))$ for all~$\tau\in\Sigma$ and all~$i\in S$. Therefore, $\psi_{\mu_S(\sigma)}(\pi_{\Gamma}(x))=\psi_{\sigma}(\pi_{\Gamma}(x))$, which is exactly what we need to prove.

We choose the orientation of~$\CR_{P_{\Gamma}}$ so that the embedding of~$P_{\Gamma}$ into~$\CR_{P_{\Gamma}}$ given by~$x\mapsto[x,g]$ preserves the orientation if and only if $\eta(g)=0$. As before, we endow the covering~$N_+$ of~$\CR_{P_{\Gamma}}$ with the induced orientation. The embedding of~$P_{\Gamma}$ into~$N_+$ given by~$x\mapsto[x,\omega]$ preserves the orientation if and only if $\varepsilon''(\omega)=1$. Since $\varepsilon(\omega)=1$, the latter is equivalent to~$\varepsilon'(\omega)=1$. The embedding $\psi_{\sigma}\colon\Delta^n\to Y$ also preserves the orientation if and only if~$\varepsilon'(\omega)=1$. By assertion~3 of Proposition~\ref{propos_pi}, the mapping~$\pi_{\Gamma}$ has degree~$1$. Therefore, $\gamma$ maps every cell of~$N_+$ isomorphic to~$P_{\Gamma}$ onto a simplex of~$Y$ with degree~$1$. Besides, for each simplex of~$Y$, there are exactly~$s$ cells of~$N_+$ that are mapped onto it. Consequently, $\gamma_*[N_+]=s[Y]$.
\end{proof}

Thus, starting from an arbitrary homology class~$z\in H_n(X;\Z)$, we have constructed an $(r/2)$-fold covering~$N_+$ of~$\CR_{P_{\Gamma}}$ and a mapping $f=\alpha\circ h \circ\gamma$ of~$N_+$ to~$X$ such that $f_*[N_+]=sz$ for certain~$s>0$. Hence, $\CR_{P_{\Gamma}}$ is a URC-manifold.

\begin{remark}
We could define the mapping~$\gamma$ of the whole manifold~$N$ onto~$Y$ again by~\eqref{eq_gamma}. Nevertheless, this mapping would have zero degree, since the $n$-dimensional cells of~$N_+$ would be mapped onto simplices of~$Y$ preserving the orientation, and the $n$-dimensional cells of~$N_-$ would be mapped onto simplices of~$Y$ reversing the orientation. 
\end{remark}

\section{Looking for the smallest URC-manifold}\label{section_small}

Since for $n\ge 3$ the class of $n$-dimensional URC-manifolds is rather extensive, an interesting problem is to find an $n$-dimensional URC-manifold that is in some sense the smallest in this class. In the two-dimensional case, URC-manifolds are exactly oriented surfaces of genera $g\ge 2$. Hence, in any reasonable sense the smallest of them is the surface of genus~$2$. In high-dimensional case, the situation is more complicated, and the answer to the question on the smallest URC-manifold depends on how to compare different manifolds.

The problem on realization of cycles by images of spheres is the classical problem on the image of the Hurewicz homomorphism. It is well known that the Hurewicz homomorphism $\pi_n(X)\otimes \Q\to H_n(X;\Q)$ for $n\ge 2$ is not always surjective. Hence, the sphere~$S^n$ is by no means a URC-manifold. However, notice that the classical results by Serre~\cite{Ser51} imply that after taking  a multiple suspension the Hurewicz homomorphism $\pi_{n+N}(\Sigma^NX)\otimes \Q\to H_{n+N}(\Sigma^NX;\Q)\cong H_n(X;\Q)$ becomes an isomorphism. Therefore, any homology class can be stably realized with some multiplicity by an image of the fundamental class of the sphere. Upon this fact is based the construction of the Chern--Dold character in any extraodinary cohomology theory due to Buchstaber~\cite{Buc70-2}. In the present paper we always consider the question on unstable realization of cycles.

It is easy to see that if a mapping $f\colon M^n\to N^n$ has nonzero degree, then the image of the homomorphism $f_*\colon \pi_1(M^n)\to \pi_1(N^n)$ has finite index in~$\pi_1(N^n)$, and the homomorphism $f_*\colon H_*(M^n;\Q)\to H_*(N^n;\Q)$ is surjective. It follows easily that no manifold with finite fundamental group is a URC-manifold. Moreover, any URC-manifold must have connected finite-fold coverings with arbitrarily large Betti numbers. It follows from a result by Kotschick and L\"oh~\cite{KoLo09} that no direct product of two manifolds of positive dimensions is a URC-manifold.

In dimension~$3$, we know about the domination relation more than in higher dimensions. In particular, Sun~\cite{Sun15} has recently shown that any oriented hyperbolic manifold is a URC-manifold. (A manifold is called \textit{hyperbolic\/} if it admits a Riemannian metric of constant negative curvature.) There are many examples of three-dimensional homology spheres that are hyperbolic manifolds. By the result of Sun all they are URC-manifolds.

\begin{quest}\label{quest_homsphere}
For which $n\ge 4$ there exist $n$-dimensional homology spheres \textnormal{(}or at least rational homology spheres\textnormal{)} that are URC-manifolds?
\end{quest}

\begin{remark}
It has already been mentioned above that any nonzero degree mapping  $f\colon M^n\to N^n$ induces a surjective homomorphism in homology with rational coefficients. Hence no rational homology sphere can dominate a manifold that is not a rational homology sphere. However, this is not prevent a homology sphere from being a URC-manifold, since coverings of homology spheres can have nonzero (and arbitrarily large) Betti numbers.
\end{remark}

Let $\K$ be a field. Recall that the \textit{Betti numbers with coefficients in~$\K$} of a space~$X$ are the numbers $$\beta_i^{\K}(X)=\dim_{\K}H_i(X;\K),$$ and the  \textit{total Betti number with coefficients in~$\K$} of~$X$ is the sum $$\beta^{\K}(X)=\sum\beta_i^{\K}(X).$$ 
If $\K=\Q$ is the field of rational numbers, then for spaces with finitely generated homology groups, the numbers~$\beta_i^{\Q}$ are the usual Betti numbers, i.\,e., the ranks of the free parts of the groups~$H_i(X;\Z)$, respectively. 
Since the answer to Question~\ref{quest_homsphere} is unknown, it is reasonable to consider the following more general problem.

\begin{problem}
For every dimension $n\ge 4$, find an $n$-dimensional URC-\allowbreak manifold~$M^n$ with the smallest total Betti number~$\beta^{\K}(M^n)$. Does there exist a URC-manifold $M^n$ such that for any other URC-manifold~$N^n$ of the same dimension, the inequalities $\beta_i^{\K}(N^n)\ge \beta_i^{\K}(M^n)$ hold true for all~$i$?
\end{problem}

In light of this problem it would be interesting to find out which of the already constructed URC-manifolds has the smallest total Betti number. Thus, the following natural question arises. 

\begin{quest}\label{quest_sc}
Let $\CC_n$ be the class consisting of all orientable manifolds~$M_{P_{\Gamma},\lambda}$ and all two-fold orientation coverings~$\overline{M}_{P_{\Gamma},\lambda}$ of non-orientable manifolds~$M_{P_{\Gamma},\lambda}$, where $\Gamma$ runs over connected graphs on the vertex set $\{0,\ldots,n\}$. Which of the manifolds in~$\CC_n$ has the smallest total Betti number with coefficients in~$\K$? 
\end{quest}

We shall be interested in the cases $\K=\Q$ and $\K=\Z_2$. In light of the studied in the present paper problem on realization of cycles with multiplicities the most natural characteristics of URC-manifolds are Betti numbers with coefficients in~$\Q$. On the other hand, Betti numbers with coefficients in~$\Z_2$ are easier to compute for small covers. 

The author does not know the answer to Question~\ref{quest_sc}. Nevertheless, we shall at least show that among URC-manifolds constructed in the present paper there are manifolds whose total Betti numbers are much smaller than the total Betti numbers of small covers of permutohedra, which have been known to be  URC-manifolds before.  In particular, for such manifolds we can take the manifolds~$\overline{M}_{\As^n,\clambda}$. This support our intuition that the small covers of~$\As^n$ must be the `smallest' among the small covers of graph-associahedra corresponding to connected graphs, and must be `much smaller' then the small covers of~$\Pe^n$, at least for~$n$ large enough. This intuition takes its origin from a theorem of Buchstaber and Volodin~\cite{BuVo11} claiming that the numbers of faces of graph-associahedra corresponding to connected graphs are always greater than or equal to the corresponding numbers of faces of Stasheff associahedra~$\As^n$, and the same is true for other important characteristics such as $h$-numbers and $\gamma$-numbers. Moreover, for large~$n$, the  numbers of faces of~$\As^n$ are much smaller than the numbers of faces of~$\Pe^n$. For instance, the number of facets of~$\As^n$ is equal to $n(n+3)/2$ while the number of facets of~$\Pe^n$ is equal to~$2^{n+1}-2$.  

In Section~\ref{subsection_simp_vol} we shall also consider simplicial volume as another important characteristic that allows us to compare URC-manifolds. 

\subsection{Betti numbers with coefficients in~$\Q$} \label{subsection_rational}

Notice that if  $\overline{M}^n$ is a two-fold orientation covering of a non-orientable closed manifold~$M^n$, then by a classical result due to Eckmann~\cite{Eck49} we have $\beta^{\Q}(\overline{M}^n)=2\beta^{\Q}(M^n)$. Moreover, 
\begin{equation}\label{eq_Brash} 
\beta_i^{\Q}(\overline{M}^n)=\beta_i^{\Q}(M^n)+\beta_{n-i}^{\Q}(M^n)
\end{equation}
for all~$i$, see~\cite{Bra69}.

Computation of the Betti numbers with coefficients in~$\Q$ of a small cover~$M_{P,\lambda}$ generally is a hard problem. There is an unpublished formula by Suciu and Trevisan that reduces this computation to the computation of the homology groups for certain unions of faces of~$P$. For small covers of graph-associahedra~$M_{P_{\Gamma},\clambda}$  corresponding to the canonical characteristic function~$\clambda$ coming from the Delzant structure, Choi and Park~\cite{ChPa15} obtained an explicit formula for the Betti numbers with coefficients in~$\Q$ from special invariants of the graph~$\Gamma$. However, even in this case the problem of finding a graph with the smallest total Betti number of~$M_{P_{\Gamma},\clambda}$ remains unsolved, though it is very likely that the minimum is attained for the path graph~$L_{n+1}$. We shall restrict ourselves to considering three examples for which the Betti numbers with coefficients in~$\Q$ can be computed explicitly:

1. The Tomei manifold~$M^n_0=M_{\Pe^n,\lambda_0}$. This manifold is orientable. Its Betti numbers with coefficients in~$\Q$ were computed by Fried~\cite{Fri86}. They are equal to the \textit{Eulerian numbers of the first kind}:  $\beta_{i}^{\Q}(M^n_0)=A(n+1,i)$. Recall that the Eulerian number of the first kind~$A(m,k)$ is the number of permutations of numbers from~$1$ to~$m$  with exactly $k$ ascents,  that is, permutations $(\nu_1,\ldots,\nu_m)$ such that there are exactly $k$ indices~$s>1$ for which $\nu_s>\nu_{s-1}$. The total Betti number of the Tomei manifold is equal to $(n+1)!$\,. Notice that the $h$-numbers of the permutohedron~$\Pe^n$ are also equal to the Eulerian numbers of the first kind, $h_i(\Pe^n)=A(n+1,i)$, see~\cite{PRW08}, \cite{Buc08}. As we have already mentioned, the Betti numbers with coefficients in~$\Z_2$ of a small cover~$M_{P,\lambda}$ are independent of~$\lambda$ and equal to the $h$-numbers of~$P$. In particular,  $\beta_{i}^{\Z_2}(M^n_0)=\beta_{i}^{\Q}(M^n_0)=A(n+1,i)$. The coincidence of the Betti numbers with coefficients in~$\Q$ and in~$\Z_2$ is a specific property of the Tomei manifolds. Generally, the integral homology of a small cover often contains $2$-torsion, which implies that their Betti numbers with coefficients in~$\Q$ are smaller than their Betti numbers with coefficients in~$\Z_2$, that is, the $h$-numbers of~$P$.

2. The second example is also a small cover of the permutohedron but corresponding to the canonical Delzant characteristic function~$\clambda$. This manifold $M_{\Pe^n,\clambda}$ is the set of real points of a smooth projective toric variety, which is called the \textit{Hessenberg variety}. For $n\ge 2$, the manifold~$M_{\Pe^n,\clambda}$ is non-orientable. Its Betti numbers with coefficients in~$\Q$ were computed by Henderson~\cite{Hen12} (see also~\cite{ChPa15}):
$$
\beta_{i}^{\Q}(M_{\Pe^n,\clambda})=\binom{n+1}{2i}E_{2i},
$$
where $E_{m}$ is the \textit{Euler zigzag number}, that is, the number of permutations $(\nu_1,\ldots,\nu_m)$ such that
$\nu_1<\nu_2>\nu_3<\nu_4>\cdots$ (the signs~$<$ and~$>$ alternate). Equivalently, $E_m$ are the coefficients in the decomposition
$$
\sec t+\tan t=\sum_{m=0}^{\infty}\frac{E_m}{m!}\,t^m.
$$
Hence, the Betti numbers of the two-fold orientation covering~$\overline{M}_{\Pe^n,\clambda}$ of~$M_{\Pe^n,\clambda}$ are given by
\begin{gather*}
\beta_{i}^{\Q}(\overline{M}_{\Pe^n,\clambda})=\binom{n+1}{2i}E_{2i}+\binom{n+1}{2n-2i}E_{2n-2i},\\
\beta^{\Q}(\overline{M}_{\Pe^n,\clambda})=2\sum_{i=0}^{\left[\frac{n+1}{2}\right]}\binom{n+1}{2i}E_{2i}.
\end{gather*}

3. Now, consider the small cover~$M_{\As^n,\clambda}$. For $n\ge 2$, it is also non-orientable. Its Betti numbers with coefficients in~$\Q$ were computed by Choi and Park~\cite{ChPa15}:
$$
\beta_{i}^{\Q}(M_{\As^n,\clambda})=\left\{
\begin{aligned}
&\binom{n+1}{i}-\binom{n+1}{i-1},&&&0\le i\le \left[\frac{n+1}{2}\right]&,\\
&0,&&&i>\left[\frac{n+1}{2}\right]&.
\end{aligned}
\right.
$$ 
Hence, the Betti numbers of the two-fold orientation covering~$\overline{M}_{\As^n,\clambda}$ of~$M_{\As^n,\clambda}$ are given by
\begin{gather*}
\beta_{i}^{\Q}(\overline{M}_{\As^n,\clambda})=\beta_{n-i}^{\Q}(\overline{M}_{\As^n,\clambda})=\binom{n+1}{i}-\binom{n+1}{i-1},\qquad{}\\ {}\hspace{8cm} 0\le i< \left[\frac{n+1}{2}\right],\\
\beta_k^{\Q}(\overline{M}_{\As^{2k},\clambda})=2\binom{2k+1}{k}-2\binom{2k+1}{k-1},\\
\beta_{k-1}^{\Q}(\overline{M}_{\As^{2k-1},\clambda})=\beta_{k}^{\Q}(\overline{M}_{\As^{2k-1},\clambda})=\binom{2k}{k}-\binom{2k}{k-2},\\
\beta^{\Q}(\overline{M}_{\As^{n},\clambda})=2\binom{n+1}{\left[\frac{n+1}{2}\right]}.
\end{gather*}

It is not hard to show that for $n\ge 3$,
$$
2\binom{n+1}{\left[\frac{n+1}{2}\right]}<2\sum_{i=0}^{\left[\frac{n+1}{2}\right]}\binom{n+1}{2i}E_{2i}<(n+1)!\,,
$$
that is,
$$
\beta^{\Q}(\overline{M}_{\As^{n},\clambda})<\beta^{\Q}(\overline{M}_{\Pe^{n},\clambda})<\beta^{\Q}(M_{\Pe^{n},\lambda_0}).
$$
\begin{remark}
In fact, it is easy to see that even a single middle Betti number $\beta'(n)=\beta_{[n/2]}^{\Q}(\overline{M}_{\Pe^{n},\clambda})$ grows as $n\to\infty$ much faster than the total Betti number  $\beta''(n)=\beta^{\Q}(\overline{M}_{\As^{n},\clambda})$. Indeed, $\beta'(n)= 2(n+1)E_n$ if $n$ is even, $\beta'(n)> E_{n+1}$ if $n$ is odd, $\beta''(n)<2^{n+2}$, and it is well known that
$$
E_{2k}\sim 8\sqrt{\frac{k}{\pi}}\left(\frac{4k}{\pi e}\right)^{2k},\qquad k\to\infty.
$$  
\end{remark}

\subsection{Betti numbers with coefficients in~$\Z_2$}
Let $P$ be an $n$-dimensional simple polytope. Then the \textit{$f$-vector\/} of~$P$ is the integral vector $(f_{-1}(P),f_0(P),\ldots,f_{n-1}(P))$ such that $f_k(P)$ is the number of $(n-k-1)$-dimensional faces of~$P$. (In particular, $f_{-1}=1$, since the polytope is considered as the only  $n$-dimensional face of itself.) The \textit{$h$-vector\/} of~$P$ is the integral vector $(h_0(P),\ldots,h_n(P))$ such that
\begin{multline*}
h_0(P)x^n+h_1(P)x^{n-1}+\cdots+h_n(P)={}\\f_{-1}(P)(x-1)^n+f_0(P)(x-1)^{n-1}+\cdots+f_{n-1}(P).
\end{multline*}

Davis and Januszkiewicz~\cite{DaJa91} computed the cohomology ring with coefficients in~$\Z_2$ of an arbitrary small cover~$M_{P,\lambda}$, and showed that
\begin{equation}\label{eq_DJ}
\beta_i^{\Z_2}(M_{P,\lambda})=h_i(P).
\end{equation}
Postnikov, Reiner, and Williams~\cite{PRW08} computed the $h$-vectors for many important classes of graph-associahedra. Buchstaber and Volodin~\cite{BuVo11} proved that for any connected graph~$\Gamma$ on the vertex set~$\{0,\ldots,n\}$, there are inequalities
\begin{equation}\label{eq_h_ineq}
h_i(P_{\Gamma})\ge h_i(\As^n)=\frac{1}{n+1}\binom{n+1}{i}\binom{n+1}{i+1}, \quad 1\le i\le n-1.
\end{equation}
Moreover, each of inequalities~\eqref{eq_h_ineq} becomes an equality only if $\Gamma\cong L_{n+1}$, i.\,e., $P_{\Gamma}\cong\As^n$.

Thus, if the associahedron~$\As^n$ admitted a characteristic function~$\lambda^*$ such that the manifold~$M_{\As^n,\lambda^*}$ were orientable, then the  number
$$
\beta^{\Z_2}(M_{\As^n,\lambda^*})=\frac{1}{n+1}\sum_{i=0}^n\binom{n+1}{i}\binom{n+1}{i+1}
$$
would be the smallest among the total Betti numbers with coefficients in~$\Z_2$ of all orientable small covers~$M_{P_{\Gamma},\lambda}$ corresponding to connected graphs~$\Gamma$ on the vertex set $\{0,\ldots,n\}$. However, even in this case it would not  be clear whether this number would be the smallest among the total Betti numbers with coefficients in~$\Z_2$ of all manifolds in the class~$\CC_n$, since the computation of the Betti numbers with coefficients in~$\Z_2$ of the two-fold coverings~$\overline{M}_{P_{\Gamma},\lambda}$ is a harder problem. The author does not know for which~$n$ there exist orientable small covers of~$\As^n$. As we have mentioned above, the small covers~$M_{\As^n,\clambda}$ are non-orientable for all $n\ge 2$. It is easy to check that all small covers of the pentagon~$\As^2$ are non-orientable. On the other hand, there are orientable small covers over the three-dimensional associahedron~$\As^3$. For instance, one can take the characteristic function~$\lambda^*$ given by
\begin{gather*}
\lambda^*(a_{\{0\}})=\lambda^*(a_{\{1\}})=b_1,\qquad \lambda^*(a_{\{2\}})=\lambda^*(a_{\{3\}})=b_2,\\
\lambda^*(a_{\{0,1\}})=\lambda^*(a_{\{1,2\}})=\lambda^*(a_{\{2,3\}})=b_3,\\
\lambda^*(a_{\{0,1,2\}})=\lambda^*(a_{\{1,2,3\}})=b_1+b_2+b_3.
\end{gather*}

For a non-orientable $n$-dimensional manifold~$M$, it is easy to obtain the following estimates for the Betti numbers with coefficients in~$\Z_2$ of its two-fold orientation covering~$\overline{M}$:
$$
\beta_i^{\Q}(M)+\beta_{n-i}^{\Q}(M)\le \beta_i^{\Z_2}(\overline{M})\le 2\beta_i^{\Z_2}(M).
$$
The first inequality follows from~\eqref{eq_Brash} and the inequality $\beta^{\Q}_i(\overline{M})\le\beta_i^{\Z_2}(\overline{M})$; the second inequality is obtained from the Gysin exact sequence of the covering  $p\colon\overline{M}\to M$
$$
\ldots\to H^i(M;\Z_2)\xrightarrow{p^*} H^{i}(\overline{M};\Z_2)\xrightarrow{p_!} H^i(M;\Z_2)\xrightarrow{\psi} H^{i+1}(M;\Z_2)\to\ldots
$$
In this exact sequence, $\psi$ is the multiplication by the first Stiefel--Whitney class~$w_1(M)$. 

For small covers, Davis and Januszkiewicz~\cite{DaJa91} wrote explicitly the cohomology ring~$H^*(M_{P,\lambda};\Z_2)$  and the first Stiefel--Whitney class~$w_1(M_{P,\lambda})$. Nevertheless, it is a still unsolved problem to compute the dimensions of the kernel and the cokernel of~$\psi$ to which the computation of the Betti numbers with coefficients in~$\Z_2$ of~$\overline{M}_{P,\lambda}$ is reduced. However, the inequalities $2\beta^{\Q}(M)\le \beta^{\Z_2}(\overline{M})\le 2\beta^{\Z_2}(M)$ imply at least that the total Betti number with coefficients in~$\Z_2$ of~$\overline{M}_{\As^n,\clambda}$ grows much slower than the total Betti numbers with coefficients in~$\Z_2$ of the Tomei manifold~$M^n_0=M_{\Pe^n,\lambda_0}$ and of the two-fold covering of the real Hessenberg manifold~$\overline{M}_{\Pe^n,\clambda}$. (For the Tomei manifold formulae~\eqref{eq_DJ} easily imply that $\beta_i^{\Z_2}(M_0^n)=\beta_i^{\Q}(M_0^n)=A(n+1,i)$ and $\beta^{\Z_2}(M_0^n)=\beta^{\Q}(M_0^n)=(n+1)!$\,.)

\subsection{Simplicial volume}\label{subsection_simp_vol}
For any topological space~$X$, the vector spaces~$C_n(X;\R)$ of the singular chains of it with real coefficients can be endowed with the $L^1$-norms~$\|{\cdot}\|_1$ such that $\|\xi\|_1=\sum_{i=1}^q|\alpha_i|$ if $\xi=\sum_{i=1}^q\alpha_i\sigma_i$, where $\alpha_i$ are real numbers and $\sigma_i$ are pairwise different singular simplices. 
By definition, the \textit{simplicial volume\/} of an $n$-dimensional oriented closed manifold~$M$ is the number 
$$
\|M\|=\inf_{\xi\in[M]}\|\xi\|_1,
$$
where the infimum is taken over all singular cycles $\xi\in C_n(M;\R)$ representing the fundamental homology class $[M]\in H_n(M;\R)$.

If~$M^n$ dominates~$N^n$, then $\|M^n\|\ge \|N^n\|$. It is well known that $\|\hM^n\|=r\|M^n\|$ whenever $\hM^n$ is an $r$-fold covering of~$M^n$. Hence, for $n\ge 2$, any  URC-manifold has a nonzero simplicial volume.

\begin{problem}\label{problem_sv}
In every dimension $n\ge 3$ find the infimum of the simplicial volumes of $n$-dimensional URC-manifolds. If this infimum is achieved, find a URC-manifold~$M^n$ with the smallest simplicial volume~$\|M^n\|$. 
\end{problem}

By a well-known theorem of Gromov (see~\cite{Gro82}), for an $n$-dimensional hyperbolic manifold, we have $\|M^n\|=\mvol(M^n)/v_n$, where $\mvol(M^n)$ is the hyperbolic volume of~$M^n$, and $v_n$ is the supremum of volumes of convex simplicies in the $n$-dimensional Lobachevskii space~$\Lambda^n$. In particular, $v_3=1.0149\ldots$ is the volume of a regular ideal tetrahedron in~$\Lambda^3$. It is well known that the closed three-dimensional hyperbolic manifold with the smallest hyperbolic volume is the so-called Fomenko--Matveev--Weeks manifold~$Q_1$. Its volume $\mvol(Q_1)=0.9427\ldots$ was computed by Matveev and Fomenko~\cite{MaFo88}. The minimality of its volume was conjectured in~\cite{MaFo88} and was proved by Gabai, Meyerhoff,  and Milley~\cite{GMM09}. As we have mentioned above, all three-dimensional hyperbolic manifolds are URC-manifolds. In particular, $Q_1$ is a URC-manifold. Its simplicial volume is 
$\|Q_1\|=\mvol(Q_1)/v_3=0.9288\ldots$

\begin{quest}\label{quest_sv}
Is it true that the Fomenko--Matveev--Weeks manifold has the minimal simplicial volume among all three-dimensional URC-manifolds?
\end{quest}

Unfortunately, it is very hard to compute or at least to estimate the simplicial volumes of manifolds. The author knows no approaches to Question~\ref{quest_sv} and Problem~\ref{problem_sv}. Other results on relationship between URC-manifolds and simplicial volume can be found in the author's paper~\cite{Gai13-b}.

\smallskip

The author is grateful to V.\,M.~Buchstaber for multiple fruitful discussions.

\end{document}